\DeclareMathAlphabet{\mathpzc}{OT1}{pzc}{m}{it}
\DeclareSymbolFont{cyrletters}{OT2}{wncyr}{m}{n}
\DeclareMathSymbol{\Sha}{\mathalpha}{cyrletters}{"58}
\begin{document}

\baselineskip=17pt

\pagestyle{headings}

\numberwithin{equation}{section}

\makeatletter                                                           %LMS

% 1. Smaller section titles                                             %LMS
\def\section{\@startsection {section}{1}{\z@}{-5.5ex plus -.5ex         %LMS
minus -.2ex}{1ex plus .2ex}{\large \bf}}                                 %LMS

% 2. Period after a single section number                               %LMS

\pagestyle{fancy}
% i comandi seguenti impediscono la scrittura in maiuscolo
% dei nomi dei capitoli e dei paragrafi nelle intestazioni
\renewcommand{\sectionmark}[1]{\markboth{ #1}{ #1}}
\renewcommand{\subsectionmark}[1]{\markright{ #1}}
\fancyhf{} % rimuove l'attuale contenuto dell'intestazione
 %e del pi\`e di pagina
\fancyhead[LE,RO]{\slshape\thepage}
\fancyhead[LO]{\slshape\rightmark}
\fancyhead[RE]{\slshape\leftmark}

\addtolength{\headheight}{0.5pt} % riserva spazio per la linea
%\fancypagestyle{plain}{%
%\fancyhead{} % ignora, nello stile plain, le intestazioni
\renewcommand{\headrulewidth}{0pt} % e la linea
%}

\newtheorem{thm}{Theorem}[section]
\newtheorem{mainthm}[thm]{Main Theorem}
\newtheorem*{T}{Theorem 1'}

\newcommand{\ZZ}{{\mathbb Z}}
\newcommand{\GG}{{\mathbb G}}
\newcommand{\Z}{{\mathbb Z}}
\newcommand{\RR}{{\mathbb R}}
\newcommand{\NN}{{\mathbb N}}
\newcommand{\GF}{{\rm GF}}
\newcommand{\QQ}{{\mathbb Q}}
\newcommand{\CC}{{\mathbb C}}
\newcommand{\FF}{{\mathbb F}}

\newtheorem{lem}[thm]{Lemma}
\newtheorem{cor}[thm]{Corollary}
\newtheorem{pro}[thm]{Proposition}
\newtheorem*{proposi}{Proposition \ref{pro:pro63}}
\newtheorem*{thm_notag}{Theorem}
\newtheorem{problem}{Problem}

\newtheorem{proprieta}[thm]{Property}
\newcommand{\pf}{\noindent \textbf{Proof.} \ }
\newcommand{\eop}{${\Box}$  \relax}
\newtheorem{num}{equation}{}

\theoremstyle{definition}
\newtheorem{rem}[thm]{Remark}
\newtheorem{rems}[thm]{Remarks}
\newtheorem{D}[thm]{Definition}
\newtheorem{Not}{Notation}

\newtheorem{Def}{Definition}

\newcommand{\nsplit}{\cdot}
\newcommand{\GGG}{{\mathfrak g}}
\newcommand{\GL}{{\rm GL}}
\newcommand{\SL}{{\rm SL}}
\newcommand{\SP}{{\rm Sp}}
\newcommand{\LL}{{\rm L}}
\newcommand{\Ker}{{\rm Ker}}
\newcommand{\la}{\langle}
\newcommand{\ra}{\rangle}
\newcommand{\PSp}{{\rm PSp}}
\newcommand{\Uni}{{\rm U}}
\newcommand{\GU}{{\rm GU}}
\newcommand{\GO}{{\rm GO}}
\newcommand{\Aut}{{\rm Aut}}
\newcommand{\Alt}{{\rm Alt}}
\newcommand{\Sym}{{\rm Sym}}

\newcommand{\isom}{{\cong}}
\newcommand{\z}{{\zeta}}
\newcommand{\Gal}{{\rm Gal}}
\newcommand{\SO}{{\rm SO}}
\newcommand{\SU}{{\rm SU}}
\newcommand{\PGL}{{\rm PGL}}
\newcommand{\PSL}{{\rm PSL}}
\newcommand{\loc}{{\rm loc}}
\newcommand{\Sp}{{\rm Sp}}
\newcommand{\PUni}{{\rm PU}}
\newcommand{\Id}{{\rm Id}}
\newcommand{\s}{{\sigma}}

\newcommand{\F}{{\mathbb F}}
\renewcommand{\O}{{\cal O}}
\newcommand{\Q}{{\mathbb Q}}
\newcommand{\R}{{\mathbb R}}
\newcommand{\N}{{\mathbb N}}
\newcommand{\E}{{\mathcal{E}}}
\newcommand{\G}{{\mathcal{G}}}
\newcommand{\A}{{\mathcal{A}}}
\newcommand{\C}{{\mathcal{C}}}
\newcommand{\modn}{{\rm \hspace{0.1cm} (mod \hspace{0.1cm} }}
\newcommand{\bmu}{{\textbf \mu}}
\newcommand{\hloc}{{\rm loc}}
\newcommand{\f}{{\mathfrak f}}
\newcommand{\g}{{\mathfrak g}}
\newcommand{\h}{{\mathfrak h}}
\newcommand{\cost}{{\mathfrak c}}

\newcommand\ddfrac[2]{\frac{\displaystyle #1}{\displaystyle #2}}

\vskip 0.5cm

\title{Divisibility questions in commutative algebraic groups}
\author{Laura Paladino\footnote{Partially supported by Istituto Nazionale di Alta Matematica F. Severi with grant ``Assegno di ricerca Ing. Giorgio Schirillo''; partially supported by Max Planck Institute for Mathematics in Bonn}}
\date{  }
\maketitle

\vskip 1.5cm

\begin{abstract}
Let $k$ be a number field, let $\mathcal{A}$ be a commutative algebraic group defined over $k$ and  let 
$p$ be a prime number. Let $\A[p]$ denote the $p$-torsion subgroup of $\A$.  We give some sufficient conditions for the local-global divisibility by $p$ in $\A$
and the triviality of the Tate-Shafarevich group $\Sha (k,\A[p])$.  When $\A$ is a principally polarized abelian variety, those conditions 
imply that the elements of the Tate-Shafarevich group $\Sha(k,\A)$ are divisible by $p$ in the Weil-Ch\^atelet 
group $H^1(k,\A)$ and the local-global principle for divisibility by $p$ holds in $H^r(k,\A)$, for all $r\geq 0$.

\end{abstract}

\section{Introduction} \label{sec0}
We consider two local-global problems, strongly related, that recently arose as   generalizations of some classical questions. 
Let $\A$ be  
 a commutative algebraic group defined over a number field $k$.
 Let $\bar{k}$ be the algebraic closure of $k$ and let $M_k$ be the set of places $v$ of $k$.
 For every positive integer $q$, we denote by $\A[q]$  the
$q$-torsion subgroup of $\A$ and by $k(\A[q])$  the number field obtained by adding to $k$ the coordinates of the $q$-torsion points of $\A$. 
It is well-known that $\A[q]\simeq (\ZZ/q\ZZ)^{n}$, for some positive integer $n$ depending only on $\A$. 
The Galois group $G:=\Gal(k(\A[q])/k)$ is then isomorphic to the image of the representation of the absolute Galois group $G_k:=\Gal(\bar{k}/k)$ in the general linear group $\GL_{n}(\ZZ/q\ZZ)$. The behaviour of the $G$-module $\A[q]$ is related to the answer to the following question, known as \emph{Local-Global Divisibility Problem} in commutative algebraic groups.

\par\bigskip\noindent  \begin{problem} \label{prob1} Let $\A$ be a commutative algebraic group defined over a number field $k$. Let $P\in {\mathcal{A}}(k)$ and let $q$ be a positive integer. Assume that for all but finitely many valuations $v\in k$, there exists $D_v\in {\mathcal{A}}(k_v)$ such that $P=qD_v$. Is it possible to conclude that there exists $D\in {\mathcal{A}}(k)$ such that $P=qD$?
\end{problem}

\par\bigskip\noindent This problem was stated in 2001 by Dvornicich and Zannier and its formulation was motivated by a particular case of the famous Hasse Principle on quadratic forms and by the Grunwald-Wang Theorem (see \cite{DZ}, \cite{DZ2} and \cite{DL}).

It is well-known that the vanishing of $H^1(G,\A[q])$ is a sufficient condition for the local-global divisibility by $q$ (see \cite{DZ}). Anyway, this condition is
not necessary and the obstruction to the local-global principle for divisibility by $q$ in $\A$ is given by a subgroup of  $H^1(G,\A[q])$, denoted by
$H^1_{\loc}(G,\A[q])$ (see Section \ref{sec1} for further details), that contains an isomorphic copy of the Tate-Shafarevic group $\Sha(k,\A[q])$.
In fact in \cite{San} Sansuc introduced a modified Tate-Shafarevich group $\Sha_{\omega}(k,\A[p])$, which contains $\Sha(k,\A[p])$
and it is isomorphic to $H^1_{\loc}(G,\A[q])$ by Lemma 1.2 of the same paper. 

Clearly a solution to Problem \ref{prob1} for all powers $p^l$ of prime numbers $p$ is sufficient to get an answer for all integers $q$, 
by  the unique factorization in $\ZZ$ and B\'{e}zout's identity.

 In the case of elliptic curves the problem has been widely studied since 2001.
The answer is affirmative when $q$ is a prime $p$ (see \cite{DZ} and \cite{Won}), for every $k$. For all powers $2^n$, with $n\geq 2$ there are explicit 
counterexamples over $\QQ$  (see \cite{Cre}, \cite{DZ2}, \cite{Pal}, \cite{GR}) and for  $3^n$, with $n\geq 2$ there are explicit 
counterexamples both over $\QQ$  (see \cite{Cre}) and over $\QQ(\z_3)$ (see \cite{Pal}, \cite{Pal3}). For all powers of a prime $p\geq 5$
the answer is affirmative over $\QQ$ (see \cite{PRV2}). Moreover if $k$ does not contain the field $\QQ(\z_p+\z_p^{-1})$, where $\z_p$ is a p-th root of the unity, 
the local-global divisibility holds for all powers of every prime $p>( 3^{[k:\Q] /2} + 1 )^2$ (see \cite{PRV}).

An aswer to the local-global divisibility by an odd prime number $p$ in the algebraic tori, has been given in \cite{Ill}. The answer is positive in every torus of dimension $n <3(p-1)$.
On the contrary for all $n\geq 3(p-1)$ there are counterexamples. \par
This last result in particular shows that if $n\geq 3$, then even the local-global divisibility by $p$ may fail.
\normalcolor So we are sure that \emph{ the local-global divisibility by $p$ does not hold in general}. 
This is also underlined by Dvornicich and Zannier in \cite[\S 3]{DZ}, for $n\geq 3$. They construct some examples of subgroups of $\GL_n(q)$, with $n\in\{3,4\}$,
and show that the local-global divisibility by $p$ fails in $\A$ over $k$ if $\Gal(k(\A[p])/k)$ has a representation in $\GL_n(q)$, whose image is one of their examples
 (see also Subsection \ref{subsec3} for further details). A priori their example for $n=4$ can appear as a counterexample to the local-global divisibility by $p$ even
in abelian varieties. Anyway, they have no evidence that their examples really realize representations of some Galois group $\Gal(k(\A[p])/k)$ and then the situation is
not clear yet (see also \cite{Ran}).  \par For abelian varieties, some sufficient conditions to have the local-global divisibility by $p^n$, for every
$n\geq 1$ appear in \cite{GR2} and in \cite{GR3}.
Anyway, for a general abelian variety $\A$, one of the conditions is $H^1(\Gal(k(\A[p])/k),\A[p])=0$.  
So the question about the divisibility by $p$ remain in fact open,  since, as stated above, the vanishing of  $H^1(\Gal(k(\A[p])/k),\A[p])$
widely assures the local-global divisibility by $p$. Let $\z_p$ be a $p$-th root of the unity. 
Only in the case of principally polarized abelian varieties defined over number fields $k$ linearly disjoint from $\QQ(\z_p)$,
the condition $H^1(\Gal(k(\A[p])/k),\A[p])=0$ is replaced  by some conditions concerning all the fields $k(P)$ generated by the coordinates of a
point $P$, as $P$ varies in $\A[p]$ (see \cite[Theorem 3]{GR2}).  
\par In the end there are no criteria to establish the validity of  the local-global divisibility by $p$ in a general abelian
variety, as well as in a general commutative algebraic group $\A$.

\par\bigskip\noindent Here we prove that, excluding some particular cases when $p$ is small with respect to $n$, 
 the strongest obstruction to the validity of the Hasse principle for divisibility by $p$ is essentially the
reducibility of $\A[p]$ as $\Gal(\bar{k}/k)$-module. In particular if $\A[p]$ is irreducible as a $N$-module,
for every subnormal subgroup $N$ (see Definition \ref{subn} below) of $\Gal(k(\A[p])/k)$, not contained in its center, 
then we get an affirmative answer for the divisibility by all $p>\ddfrac{n}{2}+1$, for every $n$. We will call such
a module a very strongly irreducible one, in accordance with the well-know definition of
strongly irreducible module, that we are going to recall (see \cite[Definition 1.1]{Bra}).

\begin{D}
We say that an irreducible $\Gamma$-module $M$ is \emph{strongly irreducible}   
if $M$  is an  irreducible $N$-module,
for every normal subgroup $N\leq \Gamma$, not contained in the center $Z(\Gamma)$.
\end{D}

\bigskip \noindent We recall the definition of subnormal subgroup (see \cite[\S 6, pag.150]{Rot} and see also \cite{Rob})
and then we state the definition of very strongly irreducibility, that concerns
subnormal  subgroups of $\Gamma$ and not only normal subgroups of $\Gamma$. % and see  \cite[Definition 4.2]{CG}).

\begin{D} \label{subn}
A subgroup $N_0$ of a group $\Gamma$ is called a subnormal group if  there exists a normal series

$$N_0 \leq N_1 ... \leq N_j\leq \Gamma,$$

\noindent where $j$ is a positive integer (in other words $N_j\unlhd \Gamma$ and  $N_i\unlhd N_{i+1}$, for every $0\leq i\leq j-1$).
\end{D}

\begin{D}
 We say that an irreducible $\Gamma$-module $M$ is \emph{very strongly irreducible}   
if $M$ is an  irreducible $N$-module,
for every subnormal subgroup $N\leq \Gamma$, not contained in the center $Z(\Gamma)$.
\end{D}

\bigskip
\noindent We prove the following statement.

\begin{thm} \label{Pal17}
Let $p$ be a prime number. Let $k$ be a number field and let $\A$ be a commutative algebraic group defined over $k$, with $\A[p]\simeq (\ZZ/p\ZZ)^n$. 
Assume that $\A[p]$ is a very strongly irreducible $G_k$-module or a direct sum of very strongly irreducible $G_k$-modules.
If $p>\ddfrac{n}{2}+1$, then the local-global divisibility by $p$ holds in $\A$ over $k$ and $\Sha(k,\A[p])=0.$

\end{thm}

\bigskip

There are evidences that the bound  $p>n/2+1$ appearing in the statement of Theorem \ref{Pal17} could be sharp in many cases. 
At the end of the paper we will show a bound which is likely sharp in all cases (see Remark \ref{rem_sharp} and Theorem \ref{sharp}).
We have not presented this bound in the statement of Theorem \ref{Pal17}, with the aim of giving a simpler and more elegant bound for each $n$.

\bigskip
\noindent By the proof of Theorem \ref{Pal17}, we will also deduce the following results.

\begin{cor} \label{Nori2}
Let $p$ be a prime number. Let $k$ be a number field and let $\A$ be a commutative algebraic group defined over $k$, where $\A[p]\simeq (\ZZ/p\ZZ)^n$. 
Let $p\geq n+1$. If  the  absolute Galois group $G_k$ acts on $\A[p]$ as a subgroup  of an  extraspecial group, then  
$H^1(G, \A[p])=0$.

\end{cor}

\begin{cor} \label{Nori}
Let $p$ be a prime number. Let $k$ be a number field and let $\A$ be a commutative algebraic group defined over $k$, where $\A[p]\simeq (\ZZ/p\ZZ)^n$. 
Let $p> \max\left\{63,\left(\ddfrac{n}{2}+1\right)^2\right\}$. If  the  absolute Galois group $G_k$ acts on $\A[p]$ as a subgroup  of a group of Lie type in cross characteristic, then  
$H^1(G, \A[p])=0$.

\end{cor}

\noindent 
The triviality of $H^1(G, \A[p])$ is assured by a deep theorem proved by Nori 
(see \cite[Theorem E]{Nori}) in many cases, i.e. whenever $G_k$ acts semisimply on $\A[p]\simeq (\ZZ/p\ZZ)^n$ and
$p$ is greater than a constant $c(n)$,  depending only on $n$. Anyway the constant is not explicit. 
In our statement,  in the cases when $G_k$ acts on $\A[p]\simeq (\ZZ/p\ZZ)^n$ as a subgroup of $\GL_n(p)$ isomorphic to a subgroup of an almost simple group in cross characteristic or
an extraspecial group, we can respectively give explicit bounds $p>\max\left\{63,\left(\ddfrac{n}{2}+1\right)^2\right\}$ and $p> n+1$ to get the triviality of that first cohomology group.

\bigskip In the case when $\A$ is an abelian variety, with dual $\A^{\vee}$, 
the triviality of  $\Sha(k,\A[p]^{\vee})$ implies $\Sha(k,\A)\subseteq p H^r(k,\A)$, for every positive integer $r$ (see \cite[Theorem 2.1]{Cre2}). When
$\A$ and $\A^{\vee}$ are isomorphic (i.e. when $\A$ is principally polarized), then 
the vanishing of $\Sha(k,\A[p])$ itself implies $\Sha(k,\A)\subseteq p H^r(k,\A)$, for all $r\geq 1$. Such an inclusion is a sufficient and necessary condition 
to have an affirmative answer to the following second and more general local-global problem.

  \begin{problem} \label{prob2} Let $A$ be a commutative algebraic group defined over a number field $k$.
  Let $q$ be a positive integer and let $\sigma\in H^r(k,\A)$. Assume that for all $v\in M_k$ there exists $\tau_v\in H^r(k_v,\A)$
  such that $q\tau_v=\sigma$. Can we conclude that there exists $\tau \in H^r(k,\A)$, such that $q\tau=\sigma$?
  \end{problem}

 \noindent Problem \ref{prob2} was firstly considered by Cassels for $r=1$ in the case when $\A$ is an elliptic curve $\E$ (see \cite[Problem 1.3]{Cas}). In particular Cassels
 questioned if the elements of the Tate-Shafarevich group $\Sha(k,\E)$ 
  were divisible by $p^l$ in the Weil-Ch\^{a}telet group $H^1(k,\E)$, for all $l$. Tate produced soon an affirmative
  answer for divisibility by $p$  (see \cite{Cas2}).

\begin{pro}[Tate, 1962]  \label{Tate}
Problem 2 has an affirmative answer when $r=1$, $\A$ is an elliptic curve $\E$ and $q=p$ is a prime number.
\end{pro} 

\noindent The question for powers $p^l$, with $l\geq 2$ remained open for decades. The mentioned affirmative results to Problem
 \ref{prob1} in elliptic curves imply an affirmative answer to Problem \ref{prob2}, since the proofs show the triviality of
the corresponding Tate-Shafarevich group. So Cassels' question has an affirmative answer for all $p\geq 5$ in elliptic curves over $\QQ$
and for all $p>( 3^{[k:\Q] /2} + 1 )^2$ in elliptic curves over $k$. On the contrary, for powers of $p\in \{2,3\}$ the answer is negative over $\QQ$ by
\cite{Cre2}.
\par The problem was afterwards considered for abelian varieties by Ba\v{s}makov (see \cite{Bas}) and 
lately by \c{C}iperiani and Stix, who gave some sufficient conditions for a positive answer (see \cite{CS}). 
One of their conditions is again the vanishing of $H^1(\Gal(k(\A[p])/k), \A[p])$, so in particular the question for divisibility by $p$ is still open.
 In \cite{Cre} Creutz also proved that for every prime $p$, there exists an  abelian variety
$A$ defined over $\QQ(\z_p)$ such that $\Sha(k,A) \not\subseteq  pH^1(k,A)$. Thus in abelian varieties  of dimension strictly greater than 1, 
even the local-global divisibility by $p$ may fail for Problem 2, as well as for Problem 1. 
As a consequence of Theorem \ref{Pal17}, we have the following statement.

 \begin{cor} \label{P17_gal}
Let $p$ be a prime number.  Let $\A$ be an abelian variety principally polarized of dimension $g$.
Assume that $\A[p]$ is a very strongly irreducible $G_k$-module or a direct sum of very strongly irreducible $G_k$-modules and 
$p> \ddfrac{n}{2}+1$, then  the local-global divisibility by $p$ holds in $H^r(k,\A)$, for all $r\geq 0$.

\end{cor}

\noindent Corollary \ref{P17_gal} can be considered a generalization of Tate's Proposition \ref{Tate} 
to all commutative algebraic groups. 

\par\bigskip About the structure of this paper, a few preliminary known results in the theory of groups and in local-global divisibility  are stated in the next section.
Then we proceed with the proof of Theorem \ref{Pal17}. We firstly show the validity of the local-global divisibility by $p$ and the
triviality  of $\Sha(k,\A[p])$ in some particular cases, i.e.,  when the image of the representation
of $G_k$ in $\GL_n(p)$  is
an extension of one of its normal subgroups with trivial local cohomology (see Lemma \ref{extension}) 
 or when it is a whole classical group (see Lemma \ref{whole}) or when $\A[p]$ has a tensor product decomposition (see Lemma \ref{tensor} and Lemma \ref{tensor3}). 
 Then we show that Theorem \ref{Pal17} holds for $n\in\{2,3\}$. 
At the end we give a proof of Theorem \ref{Pal17} for a general $n$ and we deduce Corollary \ref{Nori2} and Corollary \ref{Nori}.

\section{Preliminary results} \label{sec1}

We recall some known results about local-global divisibility and about group theory, that will be useful in the following.

\bigskip\noindent We keep the notation introduced in Section 1. Thus $k$ denotes a number field and $\A$ denotes  a commutative algebraic group, defined over $k$.
From now on let $q:=p^l$, where $p$ is a prime number and $l$ is a positive integer. As introduced before, the $q$-torsion subgroup of $\A$ will be denoted by  $\A[q]$ 
and the number field generated over $k$ by the coordinates of the points in $\A[q]$ will be denoted by $F:=k(\A[q])$. 
The $q$-torsion subgroup $\A[q]$ of $\A$ is a $G_k$-module, where $G_k$ denotes the absolute Galois group $\Gal(\bar{k}/k)$.
We have $\A[q]\simeq (\Z/q\Z)^{n}$, for a certain $n$ depending only on $\A$.  Thus $G_k$ acts over $\A[q]$ as a subgroup of 
$\GL_{n}(\Z/q\Z)$ isomorphic to $G=\textrm{Gal}(k(\A[q])/k)$.
We still denote by $G$ the representation of $G_k$ in  $\GL_{n}(\Z/q\Z)$.
If $q=p$ is a prime number, in particular $G\leq \GL_{{n}}(p)$. When $\A$ is an abelian variety
of dimension $g$, we have $n=2g$.

\bigskip In \cite{DZ} Dvornicich and Zannier proved that the answer to the local-global question for divisibility by $q$ of points in $\A(k)$
is linked to the behaviour of a subgroup of $H^1(G,\A[q])$ named \emph{first local cohomology group} and defined as follows (see \cite[\S 2]{DZ}).

\begin{D} \label{loc_cond}
 A cocycle $\{Z_{\sigma}\}_{\sigma\in G}$ representing a class in $H^1(G,\G[q])$ satisfies the
\emph{local conditions} if, for every $\sigma\in G$, there exists $A_{\sigma}\in \G[q]$ such that
$Z_{\sigma}=(\sigma-1)A_{\sigma}$. The subgroup of $H^1(G,\G[q])$ formed by all the classes of the cocycles satisfying the local conditions
is the \emph{first local cohomology group} $H^1_{\textrm{loc}}(G,\G[q])$. Equivalently

$$H^1_{\textrm{loc}}(G,\A[q]):=\bigcap_{C\in \Omega} \ker  (H^1(G,\A[q])\xrightarrow{\makebox[1cm]{{\small $res_v$}}} H^1(C,\A[q])),$$

\noindent where $\Omega$ is the set of all cyclic subgroups $C$ of $G$ and $res_v$, as usual, denotes the restriction map. 
\end{D}

\bigskip\noindent Let $\Sigma$ be the subset of $M_k$ containing all the  places $v$ of $K$, that are  unramified in $F$. 
Thus $\Sigma$ contains all but finitely many places $v$ of $k$.
For every $v\in \Sigma$, we denote by $G_v$ the Galois group $\Gal(F_w/k_v)$, where $w$ is a place of $F$ extending $v$. 
By the  \v{C}ebotarev's Density Theorem $\Gal(F_w/k_v)$ varies among all $C\in \Omega$ as $v$ varies in $\Sigma$. 
So we have

\begin{equation} \label{h1loc}
H^1_{\textrm{loc}}(G,\A[q])=\bigcap_{v\in \Sigma} \ker  (H^1(G,\A[q])\xrightarrow{\makebox[1cm]{{\small $res_v$}}} H^1(G_v,\A[q])).
\end{equation}

\noindent By \cite[Lemma 1.2]{San} this group is isomorphic to
a group containing the Tate-Shafarevich group

$$\Sha(k,\A[q]):=\bigcap_{v\in M_k} \ker  (H^1(k,\A[q])\xrightarrow{\makebox[1cm]{{\small $res_v$}}} H^1(k_v,\A[q])).$$

\bigskip\noindent  In particular, the vanishing of $H^1_{\textrm{loc}}(G,\A[q])$ assures the
triviality of $\Sha(k,\A[q])$, that is a sufficient condition to get an affirmative answer to Problem \ref{prob2}, for $r=0$, and in many cases 
for all $r\geq 0$ (see \cite[Theorem 2.1]{Cre2} and Section \ref{sec0}). 
Furthermore the triviality of $H^1_{\textrm{loc}}(G,\A[q])$ is a sufficient condition for an affirmative answer to Problem \ref{prob1}
by \cite[Proposition 2.1]{DZ}.

\bigskip

%\vskip 0.2cm
\noindent \begin{rem} \label{rem_cyc} Observe that if $G$ is cyclic, then $H^1_{\loc}(G, \A[q])=0$.
\end{rem}

\vskip 0.2cm

\noindent The vanishing of  $H^1_{\textrm{loc}}(G,\A[q])$ is strongly related to the
 behaviour of  $H^1_{\textrm{loc}}(G_p,\A[q])$, where $G_p$ is the $p$-Sylow subgroup of $G$ (see \cite{DZ}).

\vskip 0.2cm

\begin{lem}[Dvornicich, Zannier] \label{Sylow}
Let $G_p$ be a $p$-Sylow subgroup of $G$. An element of $H^1_{\hloc}(G, \A[q])$ is zero if and only if
its restriction to $H^1_{\hloc}(G_p, \A[q])$ is zero.
\end{lem}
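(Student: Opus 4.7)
My plan is to deduce the lemma from the classical Sylow-injectivity result in group cohomology. The key observation is that every cohomology group $H^i(G,\A[p^l])$ is annihilated by $p^l$, since the coefficient module is, so it is a $p$-primary abelian group.

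The standard transfer argument gives that the composition
$$H^1(G,\A[p^l])\xrightarrow{\ \mathrm{res}\ }H^1(G_p,\A[p^l])\xrightarrow{\ \mathrm{cor}\ }H^1(G,\A[p^l])$$
equals multiplication by the index $[G:G_p]$. Since $G_p$ is a $p$-Sylow subgroup, $[G:G_p]$ is coprime to $p$, and multiplication by an integer coprime to $p$ is an automorphism of a $p$-primary abelian group. Hence $\mathrm{res}$ is injective on $H^1(G,\A[p^l])$; in particular, any class that becomes zero after restricting to $G_p$ was already zero.

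It remains to check that $\mathrm{res}$ sends $H^1_{\textrm{loc}}(G,\A[p^l])$ into $H^1_{\textrm{loc}}(G_p,\A[p^l])$, so that the restriction statement makes sense within the local subgroups. This is transparent from Definition~\ref{loc_cond}: a cocycle $\{Z_{\sigma}\}_{\sigma\in G}$ satisfying the local conditions for every $\sigma\in G$ automatically satisfies them for every $\sigma\in G_p\subseteq G$. Combining the two ingredients yields the equivalence: the ``only if'' direction is trivial, while for ``if'', a class in $H^1_{\textrm{loc}}(G,\A[p^l])$ whose $G_p$-restriction vanishes must already vanish in $H^1(G,\A[p^l])$ by injectivity, hence in $H^1_{\textrm{loc}}(G,\A[p^l])$.

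I do not anticipate a genuine obstacle here; the proof amounts to one invocation of the identity $\mathrm{cor}\circ\mathrm{res}=[G:G_p]$ on a $p$-primary module, plus the elementary stability of the local-condition subgroup under restriction. The only point worth being careful about is implicitly using the equivalence between the two descriptions of $H^1_{\textrm{loc}}$ (as the intersection of the kernels of $\mathrm{res}_v$ over $v\in\Sigma$, and as the set of cocycles satisfying Definition~\ref{loc_cond}), but this equivalence is supplied by \v{C}ebotarev and already recorded in the excerpt.
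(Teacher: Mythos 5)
Your proof is correct and is essentially the standard argument: the paper states this lemma without proof (citing Dvornicich--Zannier), and the original proof in [DZ] is precisely the restriction--corestriction identity $\mathrm{cor}\circ\mathrm{res}=[G:G_p]$ applied to the $p$-primary group $H^1(G,\A[p^l])$, together with the observation that restriction preserves the local-condition subgroup. Nothing is missing.
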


\bigskip
%\vskip 0.1cm
In our proofs of Theorem \ref{Pal17}, a crucial tool is the use of  Aschbacher's Theorem on the classification of maximal subgroups of $\GL_n(q)$ (see \cite{Asc}).
Aschbacher proved that the maximal subgroups of $\GL_n(q)$  could be divided into 9 specific classes, that we denote by $\C_i$, $1\leq i\leq 9$
(by $\C_9$ we denote a class that was originally called $\mathcal{S}$ by Aschbacher).
For a big $n$, it is a very hard open problem to find  the maximal subgroups of $\GL_n(q)$ of type $\C_9$. We have an explicit
list of such groups only for $n\leq 12$ (see \cite{BHR}). On the contrary, the maximal subgroups of $\GL_n(q)$ of geometric type  (i.e. of class $\C_i$, with $1\leq i\leq 8$) have been described for every $n$ (see \cite{KL}).
We recall some notations in group theory and then we resume the description  of the maximal subgroups of $\GL_n(q)$ of geometric type
 in the following Table 1 (see \cite[Table 1.2.A, $\S$ 3.5 and $\S$ 4.6]{KL}).

\bigskip

%\vskip 0.2cm

\begin{Not} 
Let $n,l$ be  positive integers, let $p$ be a prime number and let $q=p^l$. We denote by $\FF_q$ the finite field with $q$ elements.  
Let $\omega_q$ be a primitive element of $\F_q^*$. We use the standard notations for the special linear group
 $\SL_n(q)$, the projective special linear group $\PSL_n(q)$,  
the  unitary group $\Uni_n(q)$, the projective unitary group $\PUni_n(q)$,
the symplectic group $\Sp_n(q)$, the projective symplectic group $\PSp_n(q)$, the simmetric group $S_n$ and the alternating group $A_n$.
By $C_n$ we denote a cyclic group of order $n$  
and by $p^{1+2n}$ an extraspecial group of order $p^{1+2n}$.
Furthermore, if $n$ is odd or both $n$ and $q$ are even, then we denote by $\textrm{O}_n(q)$ the orthogonal group and by $\textrm{SO}_n(q)$ the  special orthogonal group. 
If $n$ is even and $q$ is odd  we denote by  (see \cite{BHR})
\begin{description}
\item[ ]  $\GO_n^+(q)$ the stabilizer of the non-degenerate symmetric bilinear antidiagonal form (1,...,1);
\item[ ]  $\SO_n^+(q)$ the subgroup of $\GO_n^+(q)$ formed by the matrices with determinant 1;
\item[ ]  $\GO_n^-(q)$ the stabilizer of non-degenerate symmetric bilinear form $I_n$, when $n\equiv 2 (\textrm{mod 4})$ and
$q\equiv 3 (\textrm{mod 4})$ and the stabilizer of non-degenerate symmetric bilinear diagonal form $(\omega_q,1,...,1)$, when $n\not\equiv 2 (\textrm{mod 4})$ and
$q\not\equiv 3 (\textrm{mod 4})$;
\item[ ]  $\SO_n^-(q)$ the subgroup of $\GO_n^-(q)$ formed by the matrices with determinant 1.
\end{description}

%\vskip 0.1cm
\noindent  For $n$ even and  $\epsilon\in\{+,-\}$, we denote by $\Omega_n^{\epsilon}(q)$ the subgroup of index 2 of $O_n^{\epsilon}$, obtained as the kernel of the spinor norm.
% and by  $\textrm{P}\Omega_n^{\epsilon}(q)$ the quotient $\Omega_n^{\epsilon}(q)/\{\pm\}$.
 \end{Not}

%\vskip 0.2cm 
\bigskip
%\noindent 
\begin{Not}
 Let $A, B$ be two groups. We denote by

\begin{description}
\item[ ] $A\rtimes B$, the semidirect product of  $A$ with $B$  (where $A\trianglelefteq A\rtimes B$);
\item[ ] $A\circ B$, the central product of $A$ and $B$;
\item[ ] $A\wr B$, the wreath product of $A$ and $B$;
  \item[ ] $A.B$, a group $\Gamma$ that is an extension of  its normal subgroup $A$ with the group $B$ (then $B\simeq \Gamma/A$),
in the case when we do not know if it is
a split extension or not;
\item[ ] $A^{.}B$, a group $\Gamma$ that is non-split extension of its normal subgroup $A$ with the group $B$ (then $B\simeq \Gamma/A$);
\item[ ] $A:B$, a group $\Gamma$ that is a split extension of its normal subgroup $A$ with the group $B$
(then $B\simeq \Gamma/A$ and $\Gamma\simeq A\rtimes B$).
\end{description}
 \end{Not}

\begin{center}

\begin{tabular}{|c|c|c|}

\hline
 type & description & structure  \\
\hline
$\C_1$ &   \begin{minipage}[t]{7cm} stabilizers of totally singular or non$–$singular subspaces \end{minipage}
&  maximal parabolic group\\
\hline
 $\C_2$ &  \begin{minipage}[t]{7cm} stabilizers of direct sum decompositions \par $V =\bigoplus_{i=1}^{r} V_i$, with each $V_i$
of dimension $t$ \end{minipage} &  \begin{minipage}[t]{6cm}  \centerline{$\GL_t(q)\wr S_r, n=rt$} \end{minipage} \\
\hline
$\C_3$ & \begin{minipage}[t]{7cm} stabilizers of extension fields of $\FF_q$ of prime index $r$   \end{minipage} &  $\GL_t(q^r).C_r, n=rt, r$ prime \\
\hline
 $\C_4$ & \begin{minipage}[t]{7cm} stabilizers of tensor product decompositions $V=V_1\otimes V_2$  \end{minipage} &  $\GL_t(q)\circ \GL_r(q), n=rt$ \\
\hline
 $\C_5$ & \begin{minipage}[t]{7cm} stabilizers of subfields of $\F_q$ of prime index $r$  \end{minipage} & $\GL_n(q_0)$, $q=q_0^r$, $r$ prime \\
\hline
 \begin{minipage}[t]{0.5cm} \vskip 0.2cm \centerline{$\C_6$} \end{minipage} & \begin{minipage}[t]{7cm} 
normalizers of symplectic-type  $r$-groups \par ($r$ prime) in absolutely  irreducible \par representations \end{minipage} & \begin{minipage}[t]{6.8cm} 

\vskip 0.2cm
 \centerline{ $(C_{q-1}\circ r^{1+2t}).\Sp_{2t}(r)$, $n=r^{t}, r$  prime, $r\neq p$ }  

\end{minipage}   \\
\hline
 $\C_7$ & \begin{minipage}[t]{7cm} stabilizers of tensor product decompositions \par $V=\bigotimes_{i=1}^t V_i, \textrm{dim}(V_i)=r$  \end{minipage} &
$ \underbrace{ (\GL_r(q)\circ ... \circ \GL_r(q)) }_{t} .S_t, n=r^t $\\
 \hline
 \begin{minipage}[t]{0.5cm} \vskip 0.2cm \centerline{$\C_8$} \end{minipage} & \begin{minipage}[t]{7cm}  \vskip 0.2cm \centerline{classical subgroups} \end{minipage} & \begin{minipage}[t]{6cm}    \centerline{$\Sp_n(q)$, $n$ even } \par
  \centerline{$\textrm{O}_n^{\epsilon}(q)$, $q$ odd}  \par
 \centerline{$\Uni_n(q^{\frac{1}{2}}),$ $q$ a square} \end{minipage} \\
\hline
\multicolumn{3}{c}{  }\\

\multicolumn{3}{c}{Table 1: Maximal subgroups of $\GL_n(q)$ of geometric types}
\end{tabular}

 \end{center}

\bigskip\noindent Although we generally do not know explicitly the maximal subgroups of type $\C_9$, by
 Aschbacher's Theorem, we have such a characterization of them:

 \bigskip\noindent ``if $\Gamma$ is a maximal subgroup of $\GL_n(q)$ of class $\C_9$ and $Z(\Gamma)$ denotes its center, then for
 some nonabelian simple group $T$, the group
$\Gamma/ Z(\Gamma)$ is almost simple with socle $T$; in this
case the normal subgroup $ Z(\Gamma) T$ acts absolutely
irreducibly, preserves no nondegenerate classical
form, is not a subfield group, and does not contain
$\SL_n(q)$.''

\bigskip\noindent We will use this description in our proof of Theorem \ref{Pal17}. 
Furthermore, for very small integers $n$ there are a few subsequent and more explicit versions of  Aschbacher's Theorem,
that describe exactly the maximal subgroups of class $\C_9$. To prove  Theorem \ref{Pal17} we will use the classification of the maximal subgroups of $\SL_n(q)$ appearing in \cite{BHR}, for $n\leq 12$.

\bigskip\noindent From now on we will say that a subgroup $G$ of $\GL_n(q)$ (respectively of $\SL_n(q)$) is of class $\C_i$ or of type $\C_i$, with $1\leq i\leq 9$, if $G$ is contained in
a maximal subgroup of $\GL_n(q)$ (respectively of $\SL_n(q)$) of class $\C_i$. 

\section{Proof of Theorem \ref{Pal17}}

\noindent  The proof of Theorem \ref{Pal17}
follows by the proof of the next slightly more general statement, with the only difference in the hypotheses that we assume $G\leq \GL_n(p^m)$,
 for some positive integer $m$ (instead of simply $\GL_n(p)$).
This more general assumption considering powers of $p$ in lieu of $p$ will be especially useful when $G$ is of type $\C_3$
and it is isomorphic to a subgroup of $\GL_t(p^r).C_r$, with $n=tr$, for some prime number $r$.

\begin{thm} \label{P1_bis}
Let $p$ be a prime number. Let $k$ be a number field and let $\A$ be a commutative algebraic group defined over $k$. 
Assume that $G=\Gal(k(\A[p])/k)$ is isomorphic to a subgroup of $\GL_{n}(p^m)$, for some positive integers $n, m$.
If $\A[p]$ is a very strongly irreducible $G$-module or a direct sum of very strongly irreducible $G$-modules and 
$p> \ddfrac{n}{2}+1$, then the local-global divisibility by $p$ holds in $\A$ over $k$ and $\Sha(k,\A[p])=0$.
\end{thm}

\bigskip

When $n=2$, $m=1$ and $p\neq 2$, the conclusion of Theorem \ref{P1_bis} follows immediately by
Chevalley's Theorem  on the classification of the commutative algebraic groups
in characteristc 0 (see for example \cite{Ser2}), combined with the mentioned results in \cite{DZ} and in \cite{Ill}. Anyway, when $m>1$, or when $m=1$, $p= 2$ and
$\A$ an algebraic torus, there are no similar results in the literature, even for $n=2$.  
 Thus, we will give a proof for the more general case when $G\leq \GL_2(p^m)$, with $m\geq 1$,  for $n=2$ too. That will be a part of the base
of the induction for the general case. In fact, for $n=2$ we can prove a stronger result than  Theorem \ref{P1_bis}, since
it suffices to assume that $\A[p]$ is an irreducible $G$-module (or a direct sum of irreducible $G$-modules), 
as we will see in Subsection \ref{subsec2} (have a look at Proposition \ref{lemma_caso2}). 

\bigskip  We firstly prove a very useful lemma. In fact it covers many cases
when $G$ is isomorphic to a subgroup of $\GL_n(p^m)$, which acts irreducibly on $\A[p]$ and it is an extension of a nontrivial normal
subgroup $S$ of its with trivial local cohomology. Observe that here the hypothesis of irreducibility (and not very strongly irreducibility) is sufficient.
%A very strongly irreducible $G$-module is in particular an irreducible $G$-module (as well as a strongly irreducible $G$-module).
Of course every statement proved for an irreducible $G$-module, holds for a very strongly irreducible $G$-module too (as well as for a strongly irreducible $G$-module).

\begin{lem} \label{extension}  
Let $p$ be a prime number and $n,m$ positive integers. Let $\A$ be a commutative algebraic group defined over a number field $k$. 
Assume that $G=\Gal(k(\A[p])/k)$ acts irreducibly on $\A[p]$ as a subgroup of $\GL_{n}(p^m)$, which is an extension $S.J$, where
$S$ is a nontrivial normal subgroup of $G$. 
If  $H^1_{\loc}(S,\A[p])=0$, then $H^1_{\loc}(G,\A[p])=0$.
\end{lem}

\begin{proof} 
 Consider the inflation-restriction
exact sequence

\begin{equation} \label{infres} 0\rightarrow H^1(J,\A[p]^S)\rightarrow H^1(G,\A[p])\rightarrow H^1(S,\A[p])^{J}. \end{equation}

\noindent Let $\{Z_{\sigma}\}_{\sigma\in G}$ be a cocycle representing a class $Z$ in $H^1_{\loc}(G,\A[p])$. Since the
restriction map sends $H^1_{\loc}(G,\A[p])$ to $H^1_{\loc}(S,\A[p])$
and $H^1_{\loc}(S,\A[p])=0$, then  $Z$ is in the kernel of the restriction, that coincides with the image of the inflation by the
exactness of the sequence. By assumption $\A[p]$ is an irreducible $G$-module, then it has no nontrivial proper $G$-submodules.
We are going to show that $\A[p]^S$ is a $G$-submodule of $\A[p]$.
Let $W\in \A[p]^S$ and $\sigma\in G$. We have $\sigma(W)\in \A[p]^S$ if and only if $\rho \sigma(W) =\sigma(W)$,
for all $\rho\in S$, if and only if $\sigma^{-1}\rho\sigma(W)=W$, for all $\rho\in S$. Since $S$ is a normal
subgroup of $G$, then   $\sigma^{-1}\rho\sigma\in S$, for all $\rho\in S$ and $\sigma\in G$.  
Thus $\sigma^{-1}\rho\sigma (W)=W$, for all $\rho\in S$ and $\A[p]^S$ is a $G$-submodule of $\A[p]$.
Owing to $\A[p]$ being irreducible, we have that $\A[p]^S$ is trivial or that $\A[p]^S=G$. If $\A[p]^S$ is trivial,
then $H^1(J,\A[p]^S)=0$, implying $Z=0$ and  $H^1_{\loc}(G,\A[p])=0$.
If $\A[p]^S=G$, then $S$ is trivial and we have a contraddiction with our assumption.
Then $H^1_{\loc}(G,\A[p]^S)=0$. 
\end{proof}

\noindent \begin{rem} \label{extension_rem} Observe that the conclusion of Lemma \ref{extension}
hold as well if $S$ is trivial, when $H^1_{\loc}(J,\A[p])=0$. In fact, if $S$ is trivial, then $G=J$. 
In the proof of Theorem \ref{P1_bis} we will often apply Lemma \ref{extension} when $G$ is a subgroup of  $S.C_t$,
for some positive integer $t$. In this case if $S$ is trivial, then $G$ is a subgroup of $J=C_t$. Since
 $H^1_{\loc}(C_t,\A[p])=0$, then  $H^1_{\loc}(G,\A[p])=0$ in any case.
\end{rem}

\noindent The hypothesis that $\A[p]$ is an irreducible $G$-module in the statement of Lemma \ref{extension} can be replaced by the
hypothesis that $\A[p]$ is an irreducible $S$-module. In fact if $\A[p]$ is an irreducible $S$-module, then
in particular it is an irreducible $G$-module (if $\A[p]$ has a nontrivial $G$-submodule $V$, then $V$ is also a nontrivial $S$-submodule of its). 

\begin{cor} \label{extension_sn}  
Let $p$ be a prime number and let $n,m$ be positive integers. Let $\A$ be a commutative algebraic group defined over a number field $k$. %, with $\A[p^l]\simeq (\ZZ/p^l\ZZ)^{n}$. 
Assume that $G=\Gal(k(\A[p])/k)$ is isomorphic to a subgroup of $\GL_{n}(p^m)$ that is an extension $S.J$, where $S$ is a nontrivial normal subgroup of $G$, acting irreducibly on
$\A[p]$. If  $H^1_{\loc}(S,\A[p])=0$, then  $H^1_{\loc}(G,\A[p])=0$.
\end{cor}

\bigskip\noindent With the next corollary we observe that the hypothesis about the irreducibility of $\A[p]$ as an $S$-module
in Corollary \ref{extension_sn} can be replaced by the hyphotesis of the existence of a certain particular element in $S$.   

\begin{cor} \label{extension_sn_cor}  
Let $p$ be a prime number and let $n,m$ be positive integers. Let $\A$ be a commutative algebraic group defined over a number field $k$. 
Assume that $G=\Gal(k(\A[p])/k)$ is isomorphic to a subgroup of $\GL_{n}(p^m)$ that is an extension $S.J$, with $S$ nontrivial. 
%and $l\geq n$.  
If  $H^1_{\loc}(S,\A[p])=0$ and there exist $\rho\in S$ such that $\rho-1$
is invertible, then  $H^1_{\loc}(G,\A[p])=0$.
\end{cor}

\begin{proof} 
 Obviously $\A[p]^S$ is an $S$-module.  
By using \eqref{infres} and the  argument in Lemma \ref{extension},   it suffices to prove
that $\A[p]^S$ is a trivial $S$-module and then $H^1_{\loc}(J,\A[p]^S)=0$. 
 This follows from the fact that $\rho-1$ is invertible
and it fixes no element in $\A[p]^S$.
\end{proof}

The next Lemma will be very useful in the case when $\A[p]\simeq (\FF_p)^n$ has a tensor product decomposition  
$V_1 \bigotimes V_2$, where $V_1$, $V_2$ are vector spaces
 over $\F_{p}$, with dimension respectively $t$ and $r$, where $rt=n$. 
Observe that this case does not occur when $n$ is a prime.
\bigskip

\begin{lem} \label{tensor}
Let $\A[p]=V_1 \bigotimes V_2$, where $V_1$, $V_2$ are vector spaces
 over $\F_{p^m}$, with dimension respectively $t$ and $r$.  Assume that $G=G_t\circ G_r$ acts on $\A[p]$
as a subgroup of $\GL_t(p^m)\circ \GL_r(p^m)$ and that $\A[p]$ is a strongly irreducible $G$-module. 
Assume that $H^1_{\loc}(G_t,V_1)=0$ and $H^1_{\loc}(G_r,V_2)=0$. 
Then $H^1_{\loc}(G,\A[p])=0$.
\end{lem}

\begin{proof} 
We firstly recall some basic facts.
Let $x\in V_1$ and $y\in V_2$.  Let
$\GL(V_i)$ be the group of the linear transformations of $V_i$, for $i\in \{1,2\}$ and $\GL(V)$
the group of the linear transformations of $V=V_1\otimes V_2$. As in \cite[\S 4.4]{KL},
for every $\sigma\in \GL(V_1)$ and $\tau\in \GL(V_2)$ we denote by $\sigma\otimes \tau$
the map acting on elements $x\otimes y\in V$ as $(\sigma\otimes \tau)(x\otimes y)=
\sigma(x)\otimes \tau(y)$ and then acting on the whole $V$, with the action extended by linearity. 
The group  $\GL(V_1)\otimes \GL(V_2)$ is formed by the maps $\sigma\otimes\tau$, with $\sigma\in \GL(V_1)$ and $\tau\in \GL(V_2)$.
 We recall that $\GL_t(p^m)\circ \GL_r(p^m)\simeq \GL(V_1)\otimes \GL(V_2)\leq \GL(V)$ by \cite[\S 4.4]{KL}.
A central product $\Gamma$ of two groups is a quotient of their direct
product by a subgroup of its center. Then every subgroup of $\Gamma$ is a central product of two groups too (where one of the two
groups or both can be trivial). 
In particular, if $G=G_t\circ G_r$, then $G_t$ is  a subgroup of $\GL_t(p^m)$ acting on $V_1$ and $G_r$ is  a subgroup of $\GL_t(p^m)$ 
acting on $V_2$  (see also \cite[\S 4.4]{KL}). We denote by $\overline{G_t}$ and $\overline{G_r}$ the images of $G_t$ and
respectively $G_r$ in the quotient $G=G_t\circ G_r$ of the direct product $G_t\times G_r$. Observe that in particular 
 $\overline{G_t}$ is a quotient of $G_t$ by a subgroup $Z_1$ of its center  and $\overline{G_r}$ is a quotient of $G_r$ by
a subgroup $Z_2$ of its center, such that $Z_1\simeq Z_2$.
Being $\overline{G_t}$ a normal subgroup of $G$, % and $\overline{G_r}$ are normal subgroups of $G$. 
we can use the inflation-restriction exact sequence

%\begin{equation} \label{infres2} 0\rightarrow H^1(\overline{G_r},\A[p]^{\overline{G_t}})\rightarrow H^1(G,\A[p])\rightarrow H^1(\overline{G_t},\A[p])^{\overline{G_r}}
%\rightarrow H^2(\overline{G_r},\A[p]^{\overline{G_t}}). \end{equation}

\begin{equation} \label{infres3} 0\rightarrow H^1(\overline{G_t},\A[p]^{\overline{G_r}})\rightarrow H^1(G,\A[p])\rightarrow H^1(\overline{G_r},\A[p])^{\overline{G_t}}
\rightarrow H^2(\overline{G_t},\A[p]^{\overline{G_r}}). \end{equation}

\noindent 
Since $\overline{G_r}$ is a normal subgroup of $G$, with the same argument used in the proof of Lemma \ref{extension},
we have that $\A[p]^{\overline{G_r}}$ is a $G$-submodule of $\A[p]$.
Being $\A[p]$ a strongly irreducible $G$-module, then $\A[p]^{\overline{G_r}}$ is the whole $\A[p]$ or it is trivial.
If $\A[p]^{\overline{G_r}}=\A[p]$, then $\overline{G_r}$ is trivial. We have $G=G_t\otimes \langle I_r\rangle \simeq \overline{G_t}$.
Observe that in particular $G$  acts on $\A[p]$ as $\overline{G_t}$ and
$H^1_{\loc}(\overline{G_t},\A[p])\simeq H^1_{\loc}(G,\A[p])$.
%Let $Z\in H^1_{\loc}(G,\A[p])$, such that $$Z_{\sigma\otimes\Id}=\sum_{\substack{1\leq i\leq t\\1\leq j\leq r}} \x_{\sigma,j}^{\sigma} v_i\otimes w_j.$$
%\noindent 
Let $Z_{\sigma\otimes I_{r}}$ be a cocycle representing a class in $H^1_{\loc}(G,\A[p])$, such that

\begin{equation} \label{Z_tr} Z_{\sigma\otimes I_{r}}=\left(
    \begin{array}{c}
         x_{\sigma,1}  \\
        \vdots \\
  x_{\sigma,tr} \\
    \end{array}
  \right). \end{equation}

\medskip\noindent Let $1\leq j\leq r$. We define $r$ cocycles $Z_{\sigma,j}$ representing
$r$ classes in $H^1_{\loc}(G_t,V_1)$, by setting

\begin{equation} \label{Z_trj}  Z_{\sigma,j}=\left(
    \begin{array}{c}
         x_{\sigma,j}  \\
 x_{\sigma,r+j}  \\
 x_{\sigma,2r+j}  \\
        \vdots \\
 x_{\sigma,(t-1)r+j}  \\
    \end{array}
  \right). \end{equation}

\medskip\noindent For the reader's convenience, we firstly show that the cocycles $Z_{\sigma,j}$, $1\leq r\leq t$ are well defined and satisfy the local conditions when
$t=r=2$. %Then we give some hints to generalize the argument to all integers $t$ and $r$. 
Assume that $Z_{\sigma\otimes I_2}$ is a cocycle representing a class in $H^1_{\loc}(G,\A[p])$, where $\sigma\in G_t$. We have

$$Z_{\sigma\otimes I_2}=  \left(
    \begin{array}{c}
       x_{\sigma,1} \\
        x_{\sigma,2} \\
x_{\sigma,3} \\
x_{\sigma,4}\\
    \end{array}
  \right),$$

\medskip\noindent for some $ x_{\sigma,1},  x_{\sigma,2},  x_{\sigma,3},  x_{\sigma,4}\in \FF_p$.
Let %$\sigma\in G_t$, such that

$$\sigma = \left(
    \begin{array}{cc}
       \beta_{1,1} &  \beta_{1,2} \\
        \beta_{2,1} &  \beta_{2,2} \\
    \end{array}
  \right),$$

\medskip\noindent with $\beta_{i,j} \in \ZZ/p\ZZ$, for every $1\leq i,j \leq 2$. Then

$$\sigma\otimes I_2= \left(
    \begin{array}{cccc}
       \beta_{1,1} &  0 & \beta_{1,2} & 0 \\
0 & \beta_{1,1} &  0 & \beta_{1,2}  \\
        \beta_{2,1} & 0 & \beta_{2,2} & 0\\
 0 & \beta_{2,1} & 0 & \beta_{2,2}\\
    \end{array}
  \right)$$

\medskip\noindent and 

$$\sigma\otimes I_2 - I_2\otimes I_2= \left(
    \begin{array}{cccc}
       \beta_{1,1} -1 &  0 & \beta_{1,2} & 0 \\
0 & \beta_{1,1} -1 &  0 & \beta_{1,2}  \\
        \beta_{2,1} & 0 & \beta_{2,2} -1 & 0\\
 0 & \beta_{2,1} & 0 & \beta_{2,2} -1\\
    \end{array}
  \right).$$

\medskip\noindent  Since   $Z_{\sigma\otimes I_2}$ satisfies the
local conditions, then there exist $y_{\sigma,1}, y_{\sigma,2}, y_{\sigma,3}, y_{\sigma,4}\in \FF_p$, depending on $\sigma$, such that

$$ \left(
    \begin{array}{cccc}
       \beta_{1,1} -1 &  0 & \beta_{1,2} & 0 \\
0 & \beta_{1,1} -1 &  0 & \beta_{1,2}  \\
        \beta_{2,1} & 0 & \beta_{2,2} -1 & 0\\
 0 & \beta_{2,1} & 0 & \beta_{2,2} -1\\
    \end{array}
  \right)  \left(
    \begin{array}{c}
       y_{\sigma,1} \\
        y_{\sigma,2} \\
y_{\sigma,3} \\
y_{\sigma,4}\\
    \end{array}
  \right) =  \left(
    \begin{array}{c}
       x_{\sigma,1} \\
        x_{\sigma,2} \\
x_{\sigma,3} \\
x_{\sigma,4}\\
    \end{array}
  \right).$$

\medskip\noindent Thus

\begin{equation} \label{tensor_loc_cond} \left(
    \begin{array}{c}
       (\beta_{1,1} -1)y_{\sigma,1} + \beta_{1,2}y_{\sigma,3} \\
 (\beta_{1,1} -1)y_{\sigma,2} + \beta_{1,2}y_{\sigma,4}  \\
         \beta_{2,1} y_{\sigma,1} + (\beta_{2,2}-1)y_{\sigma,3}\\
  \beta_{2,1}y_{\sigma,2} + (\beta_{2,2}-1)y_{\sigma,4}\\
    \end{array}
  \right)   = \left(
    \begin{array}{c}
       x_{\sigma,1} \\
        x_{\sigma,2} \\
x_{\sigma,3} \\
x_{\sigma,4}\\
    \end{array}
  \right). \end{equation}

\medskip\noindent We define 

$$Z_{\sigma,1}:=\left(
    \begin{array}{c}
       x_{\sigma,1} \\
x_{\sigma,3} \\
    \end{array}
  \right) \hspace{1cm} 
Z_{\sigma,2}:=\left(
    \begin{array}{c}
       x_{\sigma,2} \\
x_{\sigma,4} \\
    \end{array}
  \right).$$

\medskip\noindent We are going to show that $Z_{\sigma,1}, Z_{\sigma,2}$ are cocycles
of $G_t$ with values in $V_1$, representing two classes in $H^1_{\loc}(G_t,V_1)$.
Let $\tau\in G_t$, such that

$$\tau = \left(
    \begin{array}{cc}
       \gamma_{1,1} &  \gamma_{1,2} \\
        \gamma_{2,1} &  \gamma_{2,2} \\
    \end{array}
  \right),$$

\medskip\noindent with $\gamma_{i,j}\in \FF_p$, for all $1\leq i,j\leq 2$. Then

$$\sigma\tau = \left(
    \begin{array}{cc}
       \beta_{1,1} \gamma_{1,1} +  \beta_{1,2} \gamma_{2,1} &   \beta_{1,1} \gamma_{1,2} +  \beta_{1,2} \gamma_{2,2} \\
       \beta_{2,1} \gamma_{1,1} +  \beta_{2,2} \gamma_{2,1} &   \beta_{2,1} \gamma_{1,2} +  \beta_{2,2} \gamma_{2,2} \\
    \end{array}
  \right)$$

and

$$\sigma\tau\otimes I_2= \left(
    \begin{array}{cccc}
        \beta_{1,1} \gamma_{1,1}+  \beta_{1,2} \gamma_{2,1}  &  0 &  \beta_{1,1} \gamma_{1,2} +  \beta_{1,2} \gamma_{2,2} & 0 \\
0 &  \beta_{1,1} \gamma_{1,1}+  \beta_{1,2} \gamma_{2,1}  &  0 &  \beta_{1,1} \gamma_{1,2} +  \beta_{1,2} \gamma_{2,2} \\
         \beta_{2,1} \gamma_{1,1} +  \beta_{2,2} \gamma_{2,1}  & 0 &  \beta_{2,1} \gamma_{1,2} +  \beta_{2,2} \gamma_{2,2} & 0\\
 0 &   \beta_{2,1} \gamma_{1,1} +  \beta_{2,2} \gamma_{2,1}  & 0 &  \beta_{2,1} \gamma_{1,2} +  \beta_{2,2} \gamma_{2,2} \\
    \end{array}
  \right)$$

\medskip\noindent Let

$$Z_{\sigma\tau\otimes I_2}= \left(
    \begin{array}{c}
       x_{\sigma\tau,1} \\
        x_{\sigma\tau,2} \\
x_{\sigma\tau,3} \\
x_{\sigma\tau,4}\\
    \end{array}
  \right).$$

\medskip\noindent We have that $Z_{\sigma\tau\otimes I_2}=Z_{(\sigma\otimes I_2)(\tau\otimes I_2)}=
Z_{(\sigma\otimes I_2)}+(\sigma\otimes I_2)Z_{(\tau\otimes I_2)},$ i.e.

\begin{equation} \label{tensor_cocycle} \left(
    \begin{array}{c}
       x_{\sigma\tau,1} \\
        x_{\sigma\tau,2} \\
x_{\sigma\tau,3} \\
x_{\sigma\tau,4}\\
    \end{array}
  \right)=\left(
    \begin{array}{c}
       x_{\sigma,1} \\
        x_{\sigma,2} \\
x_{\sigma,3} \\
x_{\sigma,4}\\
    \end{array}
  \right)+ \left(\begin{array}{c}
       \beta_{1,1}x_{\tau,1} + \beta_{1,2}x_{\tau,3} \\
 \beta_{1,1}x_{\tau,2} + \beta_{1,2}x_{\tau,4}  \\
         \beta_{2,1} x_{\tau,1} + \beta_{2,2}x_{\tau,3}\\
  \beta_{2,1}x_{\tau,2} + \beta_{2,2}x_{\tau,4}\\
    \end{array}
  \right). \end{equation}

\medskip\noindent By \ref{tensor_cocycle}, in particular we deduce $Z_{\sigma\tau,1}=
Z_{\sigma,1}+\sigma(Z_{\tau,1})$ (by considering the 1st and the 3rd rows in \eqref{tensor_cocycle})
and $Z_{\sigma\tau,2}=
Z_{\sigma,2}+(\sigma)Z_{\tau,2}$ (by considering the 2nd and the 4th rows in \eqref{tensor_cocycle}). Moreover by \eqref{tensor_loc_cond} we have  

$$Z_{\sigma,1}=(\sigma-1)\left(
    \begin{array}{c}
       y_{\sigma,1} \\
y_{\sigma,3} \\
    \end{array}
  \right) \hspace{1cm} \textrm{and}\hspace{1cm} 
Z_{\sigma,2}=(\sigma-1)\left(
    \begin{array}{c}
       y_{\sigma,2} \\
y_{\sigma,4} \\
    \end{array}
  \right).$$

\medskip\noindent Thus $Z_{\sigma,1}$ and $ Z_{\sigma,2}$ satisfy the local conditions. Being $H^1_{\loc}(G_t,V_1)=0$, then
there exist $y_1,y_2,y_3,y_4\in \FF_p$ that do not depend on $\sigma$ such that

$$Z_{\sigma,1}=(\sigma-1)\left(
    \begin{array}{c}
       y_{1} \\
y_{3} \\
    \end{array}
  \right) \hspace{1cm} \textrm{and}\hspace{1cm} 
Z_{\sigma,2}=(\sigma-1)\left(
    \begin{array}{c}
       y_{2} \\
y_{4} \\
    \end{array}
  \right).$$

\medskip\noindent We deduce

$$ \left(
    \begin{array}{c}
       (\beta_{1,1} -1)y_{1} + \beta_{1,2}y_{3} \\
 (\beta_{1,1} -1)y_{2} + \beta_{1,2}y_{4}  \\
         \beta_{2,1} y_{1} + (\beta_{2,2}-1)y_{3}\\
  \beta_{2,1}y_{2} + (\beta_{2,2}-1)y_{4}\\
    \end{array}
  \right)   = \left(
    \begin{array}{c}
       x_{\sigma,1} \\
     x_{\sigma,2} \\
x_{\sigma,3} \\
x_{\sigma,4}\\
    \end{array}
  \right), $$

\medskip\noindent implying  %$H^1_{\loc}(\overline{G_t},\A[p])=0$. Since $H^1_{\loc}(\overline{G_t},\A[p])\sime H^1_{\loc}(G,\A[p])$, then in particular 
$H^1_{\loc}(G,\A[p])=0$.
For general $t$ and $r$ we may apply the
same argument.   Let $Z_{\sigma\otimes I_r}$ as in \eqref{Z_tr} and let 
 $Z_{\sigma,j}$ as in \eqref{Z_trj}, for every $1\leq j\leq r$. If $\sigma\in G_t$, such that

$$\sigma = \left(
    \begin{array}{ccc}
       \beta_{1,1} & \hdots & \beta_{1,r} \\
 \vdots &  & \vdots \\
        \beta_{t,1} &  \hdots &\beta_{t,r} \\
    \end{array}
  \right),$$

\medskip\noindent with $\beta_{i,j}\in \FF_p$, for all $1\leq i\leq t$, $1\leq j\leq r$, then

$$\sigma\otimes I_r= \left(
    \begin{array}{c|c|c}
       \beta_{1,1} I_r &  \hdots & \beta_{1,2}I_r  \\
\hline
 \vdots & \ddots & \vdots \\
\hline
        \beta_{t,1} I_r &  \hdots &\beta_{t,r} I_r \\
    \end{array}
  \right).$$

%\medskip\noindent Let

%$$Z_{\sigma\otimes I_{r}}=\left(
  %  \begin{array}{c}
    %     x_{\sigma,1}  \\
    %    \vdots \\
  %x_{\sigma,tr} \\
   % \end{array}
  %\right).$$

%\medskip\noindent For $1\leq j\leq r$, we define

%$$Z_{\sigma,j}=\left(
  %  \begin{array}{c}
    %     x_{\sigma,j}  \\
 %x_{\sigma,r+j}  \\
 %x_{\sigma,2r+j}  \\
    %    \vdots \\
 %x_{\sigma,(t-1)r+j}  \\
   % \end{array}
  %\right).$$

\medskip\noindent With the same calculations showed when $t=r=2$, one easily verifies that
$Z_{\sigma,j}\in H^1_{\loc}(G_t,V_1)$, for every $1\leq j\leq r$. Thus $Z_{\sigma,j}$
is a coboundary, for every $1\leq j\leq r$. By proceeding as in the case when $t=r=2$, this implies 
that  $Z_{\sigma\otimes I_{tr}}$ is a coboundary too. We deduce  $H^1_{\loc}(G,\A[p])=0$.\par
If $\A[p]^{\overline{G_r}}=0$, then   $H^1(\overline{G_t},\A[p]^{\overline{G_r}})$ and
 $H^2(\overline{G_t},\A[p]^{\overline{G_r}})$ are both trivial. From the inflation-restriction exact sequence
\eqref{infres3}, we have  $H^1(G,\A[p])\simeq H^1(\overline{G_r},\A[p])^{\overline{G_t}}$.
In particular $H^1(G,\A[p])$ is isomorphic to a subgroup of $H^1(\overline{G_r},\A[p])$. By identifying an element
$\rho\in\overline{G_r}$ with $\Id_t\otimes \rho$, we can proceed as above to get $H^1_{\loc}(\overline{G_r},\A[p])=0$, by using
$H^1_{\loc}(G_r,V_2)=0$.  This implies $H^1_{\loc}(G,\A[p])=0$.
\end{proof}

\bigskip \noindent With a very similar proof, we have the following result about the first local cohomology group
of a group $G$ acting on a tensor product in the way described by Steinberg's Tensor Product Theorem (see \cite{Ste}),
with the only difference that here the representation of $G$ is not projective.

\begin{lem} \label{tensor3}
Let $t$, $r$ be positive integers. 
Let $V=\bigotimes_{i=1}^t V_i$, where, for every $1\leq i\leq t$, $V_i$ is a vector space  over $\F_{p^m}$ of the same dimension $r$.
Let $\f: x\mapsto x^p$ be the Frobenius map of $\F_{p^m}$. For every $\sigma_i\in \GL( V_i)$ and every integer $0\leq j\leq m$, 
let $\f^j(\sigma_i)$ denote
the matrix obtained by raising the entries of $\sigma_i$ to their $p^j$-th powers. In particular $\f^0(\sigma)=\sigma$. Let $\rho\in S_t$.
Assume that $G$ is a subgroup of $\GL(V)$ formed by elements $\f^{j_1}(\sigma_1)\otimes  ... \otimes \f^{j_t}(\sigma_t)$  
acting on $v_1 \otimes v_2\otimes ... \otimes v_t\in V$ in the following way $(\f^{j_1}(\sigma_1)\otimes ... \otimes \f^{j_t}(\sigma_t))(v_1 \otimes v_2\otimes ... \otimes v_t)=
(\f^{j_1}(\sigma_1))(v_{\rho(1)}) \otimes ...\otimes (\f^{j_t}(\sigma_t))(v_{\rho(t)})$, where $0\leq j_i\leq m$, for every $1\leq i\leq t$. Consider the action of $G$ extended by linearity to the whole $V$. For a fixed $i$, let $G_i$ be the subgroup of $\GL(V_i)$ formed by the maps $\sigma_i$ (appearing in the $i$-th position) as above. 
Assume that $V$ is a strongly irreducible $G$-module.
If  $H^1_{\loc}(G_i, V_i)=0$, for all $1\leq i\leq t$, then
$H^1_{\loc}(G,V)=0$.
\end{lem}

\begin{proof} 
When $t=2$, we can use the same arguments as in the proof of Lemma \ref{tensor}. Here the only differences are
that the entries $\beta_{i,j}$ in $\sigma$ are now raised to their $p^j$-th powers, for some $0\leq j\leq m$ and the
$x_{\sigma,i}$, with $1\leq i\leq n$, are permuted. Then we can use induction to get the conclusion for $t>2$.
\end{proof}

\noindent Observe that Lemma \ref{tensor3} holds when $\rho=\Id$ too. Moreover
Lemma  \ref{tensor3} holds when $j_i=0$, for all $1\leq i\leq t$ (this happens in particular when $m=1$,
as, for instance, when $V=\A[p]=(\FF_p)^n$).
For completeness, we state explicitly the particular case when $\A[p]=(\FF_p)^n$,
and $\rho=\Id$, 
since we will use it in the proof of Theorem \ref{P1_bis}.

\begin{cor} \label{tensor_2}
Let $\A[p]=\bigotimes_{i=1}^t V_i$, where $V_i$ is a vector space  over $\F_{p}$ of the same dimension $r$, for every $1\leq i\leq t$.
Assume that $G$ is a subgroup of $\GL(V)$ formed by elements $\sigma_1\otimes  ... \otimes \sigma_t$  
acting on $v_1 \otimes v_2\otimes ... \otimes v_t\in \A[p]$ in the following way $(\sigma_1\otimes ... \otimes\sigma_t)(v_1 \otimes v_2\otimes ... \otimes v_t)=
\sigma_1(v_1) \otimes ...\otimes \sigma_t(v_t)$. Consider the action of $G$ extended by linearity to the whole $\A[p]$. For a fixed $i$, let $G_i$ be the subgroup of $\GL(V_i)$ formed by the maps $\sigma_i$ (appearing in the $i$-th position) as above. Assume that $\A[p]$ is a strongly irreducible $G$-module.
If  $H^1_{\loc}(G_i, V_i)=0$, for all $1\leq i\leq t$, then
$H^1_{\loc}(G,\A[p])=0$.
\end{cor}

\begin{rem} \label{tensor4}
Note that Lemma \ref{tensor}, Lemma \ref{tensor3} and Corollary \ref{tensor_2} hold as well when $G\leq \PGL(V)$, instead of $G\leq \GL(V)$. 
Thus the conclusion of Lemma \ref{tensor3} and Corollary \ref{tensor_2} hold as well in the case of a group
$G$ acting on $\A[p]$ as prescribed by Steinberg's Tensor Product Theorem  (see \cite{Ste}).
\end{rem}

\bigskip
 The next step is to  study the triviality of $H^1_{\loc}(G,\A[p])$, when 
 $G$ is one of the whole classical groups $\SL_t(p^m)$,
$\Sp_{2t}(p^m)$, $\O_t(p^m)$, $\O_{2t}^{\epsilon}(p^m)$, with $\epsilon \in \{+,-\}$, or $\Uni_t(p^m)$, for some positive integer $t$. 

\bigskip

\begin{lem} \label{whole}
 Let $p>3$. 

\begin{description}
\item[1)] Assume that $G$ is one of the groups $\SL_t(p^m)$, $\O_{2t}^{\epsilon}(p^m)$, with $\epsilon \in \{+,-\}$, 
$\O_t(p^m)$, or $\Uni_t(p^m)$, for some positive integer $t$. Assume that $\A[p]$ is a strongly irreducible $G$-module.
Then  $H^1_{\loc}(G,\A[p])=0$, for all $p>2$.
\item[2)] Assume that $G=\Sp_{2t}(p^m)$, for some positive integer $t$. Assume that $\A[p]$ is a strongly irreducible $G$-module.
Then  $H^1_{\loc}(G,\A[p])=0$, for all $p> t +1$.
\end{description}
\end{lem}

\begin{proof}
 Let $S$ be the quotient by scalar matrices of the classical matrix group $G$. In particular $S$ is
a classical Chevalley group. We have $S=G/Z(G)$, where $Z(G)$ is the center of $G$. Since no scalar matrix
has order dividing $p$, for every $p>2$, then the $p$-Sylow subgroup of $Z(G)$ is trivial
and $H^1_{\loc}(Z(G),\A[p])=0$, by Lemma \ref{Sylow}. If $Z(G)$ is nontrivial,
then we are in the situation described in Corollary \ref{extension_sn_cor} and
 $H^1_{\loc}(G,\A[p])=0$. If $Z(G)$ is trivial, then $G=S$. 
We will prove $H^1_{\loc}(S,\A[p])=0$. It is well-known that the group $S$ is a simple group (see \cite{Wil})
and it is also called a group of Lie type. 
%A common notation for groups of Lie type is $A_t(p^m)$ for projective special linear groups, $^2A_t(p^m)$
%for projective unitary groups, $B_t$ and $D_t$ for projective orthogonal groups and  $C_{t}$ for projective symplectic group $\PSp_{2t}(p^m)$.
 Assume first that $S$ is neither a projective symplectic group $\PSp_{2t}(p^m)$, % (a group of type $C_{t}$, by the notation used for groups of Lie type), 
nor a projective unitary group $\PUni_t(p^m)$. By \cite{CPS} (see in particular Table (4.5) at page 186) and \cite{CPS2} 
(see in particular Table (4.3) at page 193), the cohomology
group $H^1(S,\A[p])$ is trivial, for all $p>3$, when $t$ is the possible minimal degree of the representation of $S$
and when $t$ is the dimension of the Lie algebra with automorphism group $S$ (in this case $S$ has a natural
representation in dimension $t$ and  this often coincides with the representation of $S$ with minimal degree).
If the representation of $S$ is neither the minimal nor the natural one, then we can proceed as follows.  Let
$L(\lambda)$ denote the irreducible $S$-module of highest weight $\lambda$.
In \cite[Theorem 1.2.3]{Georgia} the authors prove that for all $p>3$ the first cohomology group
$H^1(S,L(\lambda))$ is trivial, when $\lambda$ is a fundamental
dominant weight (or it is less than a fundamental dominant weight). % and $S$ is not a twisted group.
In particular, we have  $H^1_{\loc}(S,L(\lambda))=0$.
In 1950 Chevalley proved that whenever $M$ is an irreducible $S$-module, then $M=L(\lambda)$, for
some dominant weight $\lambda$ (see \cite{His} and \cite{Che}). In particular, since we are assuming that $\A[p]$
is irreducible, then $\A[p]=L(\lambda)$, for
some dominant weight $\lambda$. In addition, every dominant weight is a positive integer linear combination of
fundamental dominant weights, and it is well-known in the theory of Lie groups that 
this implies a decomposition $L(\lambda)=\otimes_{i=1}^s L(\omega_i)$, where
$s$ is a positive integer ($s\leq n$) and $\omega_i$ is a fundamental dominant weight, for every $1\leq i\leq s$.
Thus $H^1_{\loc}(S,\A[p])\simeq H^1_{\loc}(S, \otimes_{i=1}^s L(\omega_i))$, for certain fundamental
dominant weights $\omega_i$. If $S$ preserves the tensor product decomposition $\otimes_{i=1}^s L(\omega_i)$, as in the
case of a group of class $\C_4$  (see \cite[\S 4.4]{KL}), then $S$ acts on $\A[p]$ as described
in Lemma \ref{tensor}. More 
generally the group $S$ can act on $\A[p]$ as prescribed by Steingberg's Tensor Product Theorem (see \cite{Ste}),
i.e. in the way described in Lemma  \ref{tensor3} (see also Remark \ref{tensor4}). We have
$S=  \otimes_{i=1}^s S_i$, with $S_i$ acting on $L(\omega_i)$, for every $1\leq i\leq s$.
%Observa that the case of class $C_7$ does not occur since $S$ is simple
In all those cases,
since Theorem 1.2.3 in \cite{Georgia} assures the triviality of $H^1_{\loc}(S_i,L(\omega_i))$, for all $1\leq i\leq s$, 
then the triviality of $H^1_{\loc}(S,\A[p])$ follows by the mentioned lemmas. 
Now assume that $S=\PUni_t(p^m)$, for some positive integer $t$. Then the $p$-Sylow subgroups of $S$ 
coincides with the $p$-Sylow subgroups of $\PSL_t(p^m)$. Since we have proved
 $H^1_{\loc}(\PSL_t(p^m),\A[p])=0$, for all $p>3$, then  $H^1_{\loc}(\PUni_t(p^m),\A[p])=0$, for all $p>3$ again, by Lemma
\ref{Sylow}.
If $S$ is a projective symplectic group $\PSp_n(p^l)$, then,  for $p>3$ the cohomology
 $H^1(S,\A[p])$ is trivial again, when $2t$ is the possible minimal degree of the representation of $S$
and when $2t$ is the dimension of the Lie algebra with automorphism group $S$, by \cite{CPS} (see again Table (4.5) at page 186) and \cite{CPS2} 
(see  Table (4.3) at page 193). On the contrary
if the representation of $S$ is neither the minimal nor the natural one, then it is not true in general that
the cohomology $H^1(S,L(\lambda))$ vanishes. 
As above, let $L(\lambda)$ denote the irreducible $S$-module of highest weight $\lambda$ 
and assume $L(\lambda)=\otimes_{i=1}^s L(\omega_i)$, where
$s$ is a positive integer and $\omega_i$ is a fundamental weight, for every $1\leq i\leq s$. Again
$S=  \otimes_{i=1}^s S_i$, with $S_i$ acting on $L(\omega_i)$, for every $1\leq i\leq s$.
Observe that since $L(\omega_i)$
is a vector subspace of $L(\lambda)$ over $\FF_p$, in particular $s\leq 2t$ and  $i\leq 2t$.
Let $t+1=b_0+b_1p+b_2p^2+ ... +b_jp^j$, where $j$ is a positive integer 
and $b_h\in \FF_p$, for $0\leq h\leq j$. Then the mentioned Theorem 1.2.3 in
\cite{Georgia} implies $H^1(S_i,L(\omega_i))=0$, for every $i\neq 2b_hp^h$.
On the other hand, when $i= 2b_hp^h$, then $H^1(S_i,L(\omega_i))\neq 0$.
Observe that if $p> t+1$, then $t+1=b_0$ and $2b_0=2(t+1)=2t+2>2t$. 
Thus $H^1(S_i,L(\omega_i))=0$, for every $i$, and we may
repeat the arguments used above for projective special linear groups and projective orthogonal groups to get $H^1_{\loc}(S,\A[p])=0$.
\end{proof}

The next remark, will allow us to deal with subgroups of $\SL_n(p^m)$, instead of
$\GL_n(p^m)$.

\begin{rem} \label{SL_n}
Let $G$ be a subgroup of $\GL_{n}(p^m)$ and
let $\widetilde{G}:=G\cap \SL_n(p^m)$. Since $|\GL_n(p^m)|=(p^m-1)|\SL_n(p^m)|$,
then the $p$-Sylow subgroup of $\GL_n(p^m)$ coincides with the $p$-Sylow subgroup of $\SL_n(p^m)$.
By Lemma \ref{Sylow}, we have  $H^1_{\textrm{loc}}(G,\A[p^m])=0$ if and only if
$H^1_{\loc}(\widetilde{G},\A[p^m])=0$. Moreover $\SL_n(p^m)$ is a normal subgroup of $\GL_n(p^m)$ and then,
if $\A[p]$ is a very strongly irreducible $G$-module, then $\A[p]$ is a very strongly irreducible $\widetilde{G}$-module too. 
Therefore from now on we assume  $G\leq \SL_n(p^m)$, without loss of generality.
\par Observe that when $\A[p]$ is an irreducible $G$-module, the vanishing of $H^1_{\loc}(\widetilde{G},\A[p^m])$
implies the vanishing of $H^1_{\textrm{loc}}(G,\A[p^m])$ by Lemma \ref{extension} (and Remark \ref{extension_rem}). In fact $\GL_n(p^m)$
is an extension of $\SL_n(p^m)$ by a cyclic group. 
\end{rem}

\bigskip\noindent 
From now on we will often assume, without loss of generality, that $G$ is a proper subgroup
of $\SL_n(p^m)$.

\bigskip\noindent 
For $n\in \{ 2, 3\}$ we give a proof  of Theorem \ref{P1_bis} based on a case by case analysis of the possible maximal
subgroups of $\SL_n(p^m)$. Then we proceed with the proof of Theorem \ref{Pal17} for a general $n$.

\subsection{The case when $n=2$} \label{subsec2}

In this section we consider algebraic groups $\A$ such that $G$ is isomorphic to a subgroup of $\GL_2(p^m)$, for some positive integer $m$. In particular
this is the case when $\A[p]\cong (\ZZ/p\ZZ)^2$. As stated above if $m=1$ and $p\neq 2$, then the conclusion of Theorem \ref{P1_bis} follows immediately by
Chevalley's Theorem  on the classification of the commutative algebraic groups
in characteristc 0 (see for example \cite{Ser2}), combined with the mentioned results in \cite{DZ} and \cite{Ill}. Anyway, when $m> 1$, or $m=1$, $p= 2$ and
$\A$ an algebraic torus, there are no similar results in the literature.  
 Thus, here we give a proof for the more general case when
$G\leq \GL_2(p^m)$, with $m\geq 1$. 
We use the classification of the maximal subgroups of $\SL_2(q)$ 
appearing in \cite{BHR}, for $q=p^m$, that we partially recall in the next lemma. 

\begin{lem} \label{n=2lem} Let $q=p^m$, where $p$ is a prime number and $m$ is a positive integer. 
The maximal subgroups of $\SL_2(q)$ of type $\C_i$, with $2\leq i\leq 9$  are

\begin{description}
 \item[(a)] a subgroup of type $\C_2$, the generalized quaternion group $Q_{2(q-1)}$ of order $2(q-1)$, with $q$ odd, $q\neq 5$;
  \item[(b)] a subgroup of type $\C_2$, the dihedral group $D_{2(q-1)}$ of order $2(q-1)$, with $q$ even;
  \item[(c)] a subgroup of type $\C_3$, the generalized quaternion group $Q_{2(q+1)}$,  of order $2(q+1)$, for $q$ odd;
  \item[(d)] a subgroup of type $\C_3$, the dihedral group $D_{2(q+1)}$ of order $2(q+1)$, for $q$ even;
  \item[(e)] a subgroup of type $\C_5$, the group $\SL_2(q_0).C_2$, with $q=q_0^2$;
  \item[(f)] subgroup of type $\C_5$,  the group $\SL_2(q_0)$, with $q=q_0^r$, for $q$ odd, $r$ an odd prime;
  \item[(g)] subgroup of type $\C_5$,  the group $\SL_2(q_0)$, with $q=q_0^r$, for $q$ even, $q_0\neq 2$, $r$ prime;
  \item[(h)] a group of type $\C_6$, the group $2^{1+2}.S_3$, for $q=p\equiv \pm 1 \modn 8)$;
 \item[(i)] a group of type $\C_6$, the group $2^{1+2}:C_3$, for $q=p\equiv \pm 3, 5, \pm 11, \pm 13, \pm 19 \modn 40)$;
  \item[(j)] a group of type $\C_9$, the group $C_2^{\cdot}A_5$, $q=p\equiv \pm 1 \modn 10)$ or $q=p^2$, with $p\equiv \pm 3 \modn 10)$.
\end{description}

\end{lem}

Indeed, for $n=2$ we are going to prove the following stronger result than Theorem \ref{P1_bis}, with
the assumption that $\A[p]$ is an irreducible $G$-module or  a direct sum of irreducible $G$-modules.

\begin{pro} \label{lemma_caso2}
Let $p$ be a prime number. Let $k$ be a number field and let $\A$ be a commutative algebraic group defined over $k$. 
Assume that $G=\Gal(k(\A[p])/k)$ is isomorphic to a subgroup of $\GL_{2}(p^m)$, for some positive integer $m$.
If $\A[p]$ is an  irreducible $G_k$-module or a direct sum of irreducible $G_k$-modules,  
then the local-global divisibility by $p$ holds in $\A$ over $k$ and $H^1_{\loc}(G,\A[p])=0$.

\end{pro}

\begin{proof}
As already noticed in \cite{PRV},
for every group $\Gamma$ and every direct sum of two $\Gamma$-modules $M_1$ and $M_2$,
one has  $H^1_{\textrm{loc}}(\Gamma, M_1\times M_2)\simeq   H^1_{\textrm{loc}}(\Gamma, M_1) \bigoplus H^1_{\textrm{loc}}(\Gamma, M_2) $.
Thus $H^1_{\textrm{loc}}(G,.)$ is an additive functor and it suffices to prove the
statement when $\A[p]$ is an irreducible $G_k$-module, to get an answer even in the case when $\A[p]$ is a direct sum of 
 irreducible $G_k$-modules. Thus, we may assume without loss of generality that $G$ is not of class $\C_1$ and it is
 isomorphic to one of the subgroups of $\SL_2(p^m)$ listed in Lemma \ref{n=2lem}. In cases
\textbf{(a)} (resp. \textbf{(c)}), $G$ is a subgroup of the generalized quaternion group   $Q_{2(q-1)}$ (resp. $Q_{2(q+1)}$).
The group $Q_{2(q-1)}$ (resp. $Q_{2(q+1)})$ is an extension $C_2.C_{q-1}$ (resp. $C_2.C_{q+1}$) of a cyclic group of order 2, with a cyclic group of order
$q-1$ (resp. $q+1$). Then $G$ is cyclic or it is an extension of two cyclic groups. Since the local cohomology of a cyclic group is trivial, then
by Lemma \ref{extension} and Remark \ref{extension_rem}, we have $H^1_{\textrm{loc}}(G,\A[p])=0$ (recall that we are assuming that $\A[p]$ is an irreducible $G$-module.
In cases   \textbf{(b)},   \textbf{(d)}, \textbf{(h)}, \textbf{(i)} and \textbf{(j)}, for every $p\geq 2$, the $p$-Sylow subgroup of 
$G$ is either trivial or cyclic (recall that cases \textbf{(h)}, \textbf{(i)} and \textbf{(j)} may occur only if $p\neq 2$). By Lemma \ref{Sylow}, we have $H^1_{\textrm{loc}}(G,\A[p])=0$.
Suppose that we are in case  \textbf{(e)}. For every $p> 2$, the $p$-Sylow subgroup $G_p$ of $G$ is
a subgroup of  $\SL_2(q_0)$, where $q=q_0^2$. Thus, without loss of generality, we may assume that $G$ is a
subgroup of $\SL_2(q_0)$. If $G=\SL_2(q_0)$, then $H^1_{\textrm{loc}}(G,\A[p])=0$, by Lemma \ref{whole}. 
Assume that $G$ is a proper subgroup of $\SL_2(q_0)$. If
$G$ is still of type $C_5$, then $G$ is isomorphic to a subgroup of $\SL_2(q_1)$, where
$q_0=q_1^2$. Again, if $G=\SL_2(q_1)$, then by Lemma \ref{whole}, we have
$H^1_{\textrm{loc}}(G,\A[p])=0$. We may assume that $G$ is a proper subgroup
of $\SL_2(q_1)$ and so on. Since $m$ is finite, at a certain point we will find
that either $G$ is of type $\C_i$, with $i\neq 5$, or $G$ is trivial. If $G$ is
of type $\C_i$, with $i\neq 5$, because of our assumption that $\A[p]$ is a very strongly irreducible $G$-module, 
then $G$ is isomorphic to one of the subgroups listed in cases \textbf{(a)},  \textbf{(b)},  \textbf{(c)},  \textbf{(d)}, \textbf{(h)}, \textbf{(i)} and \textbf{(j)}.
Thus   $H^1_{\textrm{loc}}(G,\A[p])=0$, as above. If $G$ is trivial, then
$H^1_{\textrm{loc}}(G,\A[p])=0$ too. 
The same arguments, combined with Lemma \ref{extension} and Remark \ref{extension_rem}, 
give $H^1_{\textrm{loc}}(G,\A[p])=0$, for $p=2$ too.
Cases  \textbf{(f)} and  \textbf{(g)} are
similar to case  \textbf{(e)}, being $G_p$ a subgroup of $\SL_2(q_0)$, with $q=q_0^r$, with $r$ a prime.
\end{proof}

\vskip 0.2cm
\noindent In particular we have proved Theorem \ref{P1_bis} for $n=2$. 

\subsection{The case when $n=3$} \label{subsec3}

In this section we consider algebraic groups $\A$ such that  $\A[p]\cong (\ZZ/p\ZZ)^3$. As mentioned in Section \ref{sec0}, 
in \cite{DZ} Dvornicich and Zannier underline that
the answer in this case is not obvious. In fact, they show an example in which $H^1_{\loc}(\Gamma,\ZZ/p\ZZ)\neq 0$,
where $\Gamma$ is a subgroup of the $p$-Sylow subgroup of $\GL_3(p)$ of the form

$$ \hspace{1cm}  \left(
    \begin{array}{ccc}
       1 & a & b \\
       0 & 1 & \lambda a \\
       0 & 0 & 1 \\
    \end{array}
  \right), \hspace{0.2cm} a,b\in \ZZ/p\ZZ, \lambda \in \ZZ/p\ZZ^*.$$

\bigskip\noindent Anyway, they have no evidence that this group is really the $p$-Sylow subgroup of a Galois group $\textrm{Gal}(k(\A[p])/k,\A[p])$.
Even in the case when $\Gamma$ would be the $p$-Sylow subgroup of a certain Galois group $\textrm{Gal}(k(\A[p])/k,\A[p])$,
we could get no information about the algebraic group $\A$ for which the local-global
divisibility fails. Here we prove Theorem \ref{P1_bis} for $n=3$.

We use the classification of the maximal subgroups of $\SL_3(q)$ and of  $\SU_3(q)$ appearing in \cite{BHR}.

\begin{lem} \label{M+B+L-cor} Let $q=p^m$, where $p$ is a prime number and $m$ is a positive integer. 
Let $d:=\gcd(q-1,3)$.
The maximal subgroups of $\SL_3(q)$ of type $\C_i$, with $3\leq i\leq 9$  are

\begin{description}

  \item[(a)] a group of type $\C_2$, the group $C_{q-1}^2:S_3$, for $q\geq 5$;
  \item[(b)] a group of type $\C_3$, the group $C_h:C_3$, where $h=q^2+q+1$;
  \item[(c)] a group of type $\C_5$, the group $\SL_3(q_0).C_s$, where $s:=\gcd\left(\frac{q-1}{p-1},3\right)$ 
and $q=q_0^r$, $r$ prime.

  \item[(d)] a group of type $\C_6$, the group $3_{+}^{1+2}:Q_8.C_s$, where $s=\frac{\gcd(q-1,9)}{3}$, $q=p\equiv 1 \modn 3)$ and
the extraspecial group $3_{+}^{1+2}$ is the $p$-Sylow subgroup of $\GL_3(p)$;

 \item[(e)] a group of type $\C_8$, the group $ \SO_3(q)\times C_d$, with $q$ odd;
 \item[(f)] a group of type $\C_8$, the group $\SU_3(q_0)\times C_t$, where $t:=\gcd(p-1,3)$ and $q=q_0^2$;
\item[(g)] a group of type $\C_9$, the group $\PSL_2(7)\times C_d$, for $q=p\equiv 1,2,4 \modn 7), q\neq 2$;
\item[(h)] a group of type $\C_9$, the group $C_3^{\hspace{0.1cm} .}A_6$,  of order $9\cdot 5!$, for $q=p\equiv 1,4 \modn 15)$ or $q=p^2$, with $p\equiv 2,3 \modn 5), p\neq 3$.
\end{description}

\end{lem}

\bigskip

\begin{lem} \label{SU_3} Let $q=p^m$, where $p$ is a prime number and $m$ is a positive integer. 
Let $d:=\gcd(q-1,3)$.
The maximal subgroups of $\SU_3(q)$ of type $\C_i$, with $3\leq i\leq 9$  are

\begin{description}

\item[(e.1)] a group of type $\C_2$, the group $C_{q-1}^2:S_3$, for $q\geq 5$;
  \item[(e.2)] a group of type $\C_3$, the group $C_h:C_3$, where $h=q^2+q+1$, $q\neq 3$;
  \item[(e.3)] a group of type $\C_5$, the group $\SU_3(q_0).C_s$, where $s:=\gcd\left(\frac{q+1}{q+1},3\right)$ and $q=q_0^r$, $r$ prime;
\item[(e.4)] a group of type $\C_8$, the group $\SO_3(q)\times C_d$, $q$ odd and $q\geq 7$;

  \item[(e.5)] a group of type $\C_6$, the group $3_{+}^{1+2}:Q_8.C_s$, where $s=\frac{\gcd(q+1,9)}{3}$, 
the extraspecial group $3_{+}^{1+2}$ is the $p$-Sylow subgroup of $\GL_3(p)$, $q=5$ or $q=p\equiv 2 \modn 3)$ and $q\geq 11$;

\item[(e.6)] a group of type $\C_9$, the group $\PSL_2(7)\times C_d$, $q=p\equiv 3,5,6 \modn 7)$;
\item[(e.7)] a group of type $\C_9$, the group $C_3^{\hspace{0.1cm} .}A_6$, for $q=p\equiv 11, 14 \modn 15)$;
\item[(e.8)] a group of type $\C_9$, the group $C_3^{\hspace{0.1cm} .}A_6^{\cdot}C_2$, (where here $C_2$ is a known specific quotient of $A_6$), for $q=p=5$;
\item[(e.9)] a group of type $\C_9$, the group $C_3^{\hspace{0.1cm} .}A_7$, of order $9\cdot 7\cdot 5!$, for $q=p=5$.
\end{description}

\end{lem}

\bigskip\noindent \textbf{Proof of Theorem \ref{P1_bis} for $n=3$}.
As in the case when $n=2$, since $H^1_{\textrm{loc}}(G,.)$ is an additive functor, we assume without loss of generality that $G$ is not of class $\C_1$ and
then it is a subgroup of the groups listed in
Lemma \ref{M+B+L-cor}.  We are going to show that $H^1_{\loc}(G,\A[p])=0$, for all $p\geq 3$.
In cases  \textbf{(a)}, \textbf{(b)} and  \textbf{(d)} the $p$-Sylow subgroup of $G$ is either trivial or cyclic, for all $p$
(recall that case \textbf{(a)} occur only for $p\geq 5$ and case \textbf{(a)} occur only for $q=p\equiv 1 \modn 3))$.  Therefore $H^1_{\textrm{loc}}(G, \A[p])=0$.
Assume that we are in case \textbf{(g)}. Since this case may occur only for $p\neq 3$, then the $p$-Sylow subgroup of $G$
is isomorphic to a subgroup of $\PSL_2(7)$.  Therefore, for every $p\geq 2$, the $p$-Sylow
subgroup of $G$ is either trivial or cyclic and
 $H^1_{\textrm{loc}}(G, \A[p])=0$ (recall that this case does not hold when $p=2$ too).  
Assume that we are in case  \textbf{(h)}. If $p\geq 3$, then the $p$-Sylow subgroup of $G$ is trivial or cyclic again
(observe that this case does not happen when $p=3$).
 Assume that we are in case  \textbf{(c)}.  For every $p\neq 3$, the $p$-Sylow subgroup of $G$
is a subgroup of $\SL_3(q_0)$. Since $\SL_3(q_0)$ is a normal subgroup of $G$ and $\A[p]$ is a very strongly irreducible $G$-module, we may assume, without loss of generality,
that $G\subseteq \SL_3(q_0)$.  If $G=\SL_3(q_0)$, then
  $H^1(G, \A[p])=0$, by Lemma \ref{whole}. If $G$ is trivial, then  $H^1(G, \A[p])=0$ too. 
So, suppose that $G$ is a non-trivial proper subgroup of $\SL_3(q_0)$.
If $G$ is still of type $\C_5$ in $\SL_3(q_0)$, then $G$ is contained in  $\SL_3(q_1).C_{s_1}$, where
$q_0=q_1^2$ and $s_1=\frac{\gcd(q_0-1,9)}{3}$. Again, we may assume  that $G$ is strictly contained in $\SL_3(q_1)$ and so on. 
Since $q$ is finite, after a finite number of steps we find that $G$ is of type $\C_i$, with $i\neq 5$. We have $H^1_{\textrm{loc}}(G, \A[p])=0$, by the arguments used for
the subgroups of classes $\C_i$, with $i\neq 5$.
Thus  $H^1_{\textrm{loc}}(G, \A[p])=0$ too. If $p=3$, we can use the same argument as for $p\neq 3$, combined with Lemma \ref{extension} and Remark \ref{extension_rem} (by the hypothesis
of the very strongly irreducibility of $\A[p]$).
Assume that we are in case  \textbf{(e)}. 
Again, for all $p$, the $p$-Sylow subgroup $G_p$ of $G$ is contained
in $\SO_3(q)$. Since $\A[p]$ is a very strongly irreducible $G$-module, by Lemma \ref{extension} and Remark \ref{extension_rem} we may assume without loss of generality that
$G$ is contained in $\SO_3(q)$.
 If $G=\SO_3(q)$, then $H^1_{\loc}(G,\A[p])=0$, by Lemma \ref{whole}. 
Assume that $G$ is strictly contained in $\SO_3(q)$.  The group $\SO_3(q)$ is isomorphic to $\SL_2(q)$ (see \cite[Proposition 1.10.1]{BHR}). 
In the proof of Theorem  \ref{P1_bis} for $n=2$, we have
seen that $G$ is a group extension of a cyclic group with another cyclic group or it is a whole special linear group 
or its $p$-Sylow subgroup $G_p$ is either trivial or cyclic. 
In all cases the first cohomology group $H^1_{\loc}(G,\A[p])$ vanishes.
Suppose that we are in case  \textbf{(f)}. By Lemma \ref{extension} and the assumption that $\A[p]$ is very strongly
irreducible, we may 
assume without loss of generality that $G\leq \SU_3(q)$. If $G=\SU_3(q)$, then we get the triviality of 
 $H^1_{\loc}(G,\A[p])$, by Lemma \ref{whole}.  Assume that $G$ is strictly contained  in $\SU_3(q)$.
 Thus we use Lemma \ref{SU_3}. There are only four cases in which the
 subgroups of $\SU_3$ are different from the ones listed in Lemma  \ref{M+B+L-cor}, i.e. cases    \textbf{(e.3)}, \textbf{(e.5)},  \textbf{(e.8)} and  \textbf{(e.9)}.
For all $p$,  in cases \textbf{(e.5)},   \textbf{(e.8)} and  \textbf{(e.9)}, the $p$-Sylow subgroup is either trivial or cyclic. 
Therefore $H^1_{\loc}(G,\A[p])=0$.
We can treat case  \textbf{(e.3)} in the same way as case  \textbf{(c)}.  We have proved that for every possible $G$, if $\A[p]$ is a very strongly
irreducible $G$-module and $p\geq 3$, then the local-global divisibility by $p$ holds in $\A$ over $k$. 
\eop

\bigskip

\subsection{General Case} \label{gen_case}

\bigskip

To prove Theorem \ref{P1_bis} for every $n$, we use the description of the subgroups of $\GL_n(q)$ of geometric type shown in Table 1.  For some classes of groups we also use induction, having already proved the statement for $ n\leq 3$. Moreover we use the description of
subgroups of   $\GL_n(q)$ of class $\C_9$ given in Aschbacher's Theorem and recalled in Section \ref{sec1}. When the $p$-Sylow subgroup of $G$ is isomorphic to $C_p^2$, there are known counterexample
to the local-global divisibility (see the mentioned \cite{DZ}, \cite{DZ2}, \cite{Pal2}) and  even when  the $p$-Sylow subgroup of $G$ is isomorphic to $C_p^3$
there are counterexamples (see \cite{Pal}). In various parts of the proof we will show that for $p> n/2+1$, the
$p$-Sylow subgroup of $G$ is either trivial or cyclic (which does not hold for $p\leq n/2+1$). 

\bigskip\noindent\textbf{Proof of Theorem \ref{P1_bis}}
Since we have already proved the statement for $n\in \{2,3\}$, we assume $n\geq 4$. If $p>n/2+1$, then in particular $p>3$. 
Suppose that  for every integer $n' <n$, the local-global divisibility by $p$ holds  in commutative algebraic groups $\A$ over $k$ when $G\leq \GL_{n'}(q)$ 
acts very strongly irreducibly on $\A[p]\simeq (\ZZ/p\ZZ)^{n'}$ and $p>\ddfrac{n'}{2}+1$. 
 As in the cases when $n=2$ and $n=3$, since $H^1_{\textrm{loc}}(G,.)$ is an additive functor, we may assume, without loss of generality, that 
$G$ is not of class $\C_1$.

\vskip 0.5cm
\noindent \emph{Part i. Subgroups of geometric type}

\bigskip \noindent \emph{Class $\C_2$}

\bigskip\noindent

\par Assume that $G$ is of class $\C_2$.  In this case $\A[p] =\bigoplus_{i=1}^{r} A_i$, with each $A_i$
of dimension $t$ and $G=(G_1\times ... \times G_r)\wr S_r$, where $G_i$ is a subgroup of $\GL_t(q)$ acting on $A_i$ and $tr=n$.
In this situation $\A[p]$ is irreducible, but not very strongly irreducible (nor strongly irreducible). By our assumptions, then $\A[p]$ 
has to be a direct  sum of very strongly irreducible $G$-modules. In particular $A_i$ is a very strongly irreducible $G_i$-module for all $1\leq i\leq r$.
Observe that the greatest $r$ that we can have is $r=n$ itself. But in this case $t=1$ and $\A[p] =\bigoplus_{i=1}^{n} A_i$, 
with each $A_i$ of dimension 1.  The group $G$ is then a subgroup of $C_{q-1}^n.S_n$. 
If $p>n/2+1$, then $2p>n+2$ and $p^2\nmid n!$. Thus the $p$-Sylow subgroup $G_p$ of $G$ is either trivial
or cyclic and $H^1_{\loc}(G,\A[p])=0$ by Lemma \ref{Sylow}. 
Let $r< n$. Observe that in particular we have
$r\leq n/2$. If $p>r$, then the $p$-Sylow subgroup of $G$ is contained in $G_1\times ... \times G_r$. Being $r\leq n/2$, then $p>r$ implies $p>n/2+1$. Observe that when $r<n$, then  $t< n$ too (in fact we have $t\leq n/2$). 
By induction,  for all $p>t/2+1$, we have $H^1_{\loc}(G_i, A_i)=0$, for every $1\leq i\leq r$.
Since $H^1_{\loc}(G, .)$ is an additive functor, then by induction we get that $H^1_{\loc}(G, \A[p])=0$, for $p> \max\left\{r,\ddfrac{t}{2}+1\right\}> \ddfrac{n}{2}+1$. 

\bigskip \noindent \emph{Class $\C_3$}

\bigskip\noindent

\par Suppose that  $G$ is of type $\C_3$. 
In this situation the $G$-module $\A[p]$ is considered as a vector space over a
a field extension $\widetilde{F}$ of $\FF_{p}$ with degree $r$, where
$r$ is a prime number dividing $n$. The $p$-torsion subgroup $\A[p]$
has dimension $t:=n/r$ as a vector space over $\widetilde{F}$ (see \cite[\S 5.3 and Table 2.1.A]{KL}) and
$G$ is isomorphic to a subgroup of $\GL_t(p^{mr}).C_r$. 
If $r=n$ (this in particular happens if $n$ is a prime), then the only possible subgroup of class $C_3$ is
$\GL_1(p^{m n}). C_n$. Since the cardinality of $\GL_1(p^{mn})$ is $\frac{p^{m n}-1}{p-1}$, then, for every $p$,
 the $p$-Sylow subgroup of $G$ is either trivial or cyclic and $H^1_{\loc}(G,\A[p])=0$.  
If $r< n$, then 
 $1<r\leq n/2$. %Therefore, for all $p> n/2+1$, the $p$-Sylow subgroup of $G$ is contained in  $\GL_t(p^{mr})$. 
%Observe that $\A[p]$ is a very strongly irreducible $G$-module $G\cap \GL_t(p^{mr})$ is very strongly irreducible too.
Since $1< t \leq n/2$ too,  we use induction (recall that here
$\A[p]$ is considered as a vector space of dimension $t<n$ over $\widetilde{F}$) and Lemma \ref{extension} together with
Remark \ref{extension_rem} to get $H^1_{\textrm{loc}}(G,\A[p])=0$, for every $p>  n/4+1$ (i.e. in
particular for every $p>n/2+1$). 

\bigskip \noindent \emph{Class $\C_4$}

\bigskip\noindent

\par Suppose that $G$ is of type $\C_4$. 
The group $G$ is isomorphic to a subgroup of a central product $\GL_t(p^m)\circ \GL_r(p^m)$
 acting on a tensor product $V_1\otimes V_2=\A[p]$, where
 $rt=n$ and $V_1$, $V_2$ are vector spaces
 over $\F_{p}$, with dimension respectively $t$ and $r$. We are in the situation described in Lemma \ref{tensor}.
Thus let $G=G_t\circ G_r$, with $G_t$ acting on $V_1$ and $G_r$ acting on $V_2$  (see also \cite[\S 4.4]{KL}).
 Observe that $V_1$ itself is a very strongly irreducible $G_r$-module (in fact if $V_1$ has a proper
$G_r$-submodule, then $\A[p]$ has a proper $G_r$-submodule too). 
Since $r<n$, by induction  $H^{1}_{\textrm{loc}}(G_r,V_1)=0$, for every $p> r/2+1$. 
In the same way, since $t<n$, then by induction, for all $p> t/2+1$, 
we have $H^{1}_{\textrm{loc}}(G_t,V_2)=0$ (again $V_2$ is a  very strongly irreducible $G_t$-module).  
Observe that $r\leq  n/2 $ and $t\leq  n/2$. 
Thus $H^{1}_{\textrm{loc}}(G_1,\A[p])=0$ and  $H^{1}_{\textrm{loc}}(G_2,\A[p])=0$, for every $p> n/4+1$ (in particular for every $p>n/2+1$), 
implying $H^1_{\loc}(G,\A[p])=0$, by Lemma \ref{tensor}.

\bigskip \noindent \emph{Class $\C_5$}

\bigskip\noindent

\par If $G$ is of class $\C_5$, then $G$ is isomorphic to a subgroup of $\GL_n(p^t)$, where $m=tr$,
with $t$ a positive integer and $r$ a prime. Observe that this case does not occur when $m=1$.
If $G$ is the whole group $\GL_n(p^t)$, then by Remark \ref{SL_n} and by
Lemma \ref{whole}, we have $H^1_{\textrm{loc}}(G,\A[p])=0$. If $G$ is trivial, then  $H^1_{\textrm{loc}}(G,\A[p])$
is trivial too. Suppose that $G$ is a proper non-trivial subgroup of $\GL_n(p^t)$.
If $G$ is still of class $\C_5$, then $G$ is isomorphic to
a subgroup of $\GL_n(p^{t_2})$, for some positive integer $t_2$, such that $t=r_2t_2$, with $r_2$ prime.
If $G=\GL_n(p^{t_2})$, we have again $H^1_{\textrm{loc}}(G,\A[p])=0$, by Remark \ref{SL_n} and
Lemma \ref{whole}. Then we may
assume that $G$ is a proper subgroup of $\GL_n(p^{t_2})$ and so on.
Since $m$ is finite and we are assuming that $G$ is not trivial, then $G$
is isomorphic to a subgroup of $\GL_n(p^{t_j})$ (for some positive integer $t_j$ dividing $m$) of class
$\C_i$, with $i\neq 5$.  We may then repeat the arguments used (or that we will use) for other classes $\C_i$, with $i\notin \{1, 5\}$, to get
$H^{1}_{\textrm{loc}}(G,\A[p])=0$.

\bigskip \noindent \emph{Class $\C_6$}

\bigskip\noindent

\par Suppose that $G$ is of class $\C_6$, i.e. $G$ lies in the normalizer of an extraspecial group.
This may happen only when $n=r^t$, with $r$ a prime different from $p$ and $t$ a positive integer.
We have that $G$ is a subgroup of   $(C_{q-1}\circ r^{1+2t}).\Sp_{2t}(r)$  (see \cite[\S 3.5]{KL}).
To ease notation we denote by $H$ the normal subgroup $C_{q-1}\circ r^{1+2t}$.
Let $G':=G\cap H$. Owing to $p\neq r$, the $p$-Sylow subgroup $G_p'$ of $G'$ is trivial.
Then the $p$-Sylow subgroup $G_p$ of $G$ is isomorphic to the $p$-Sylow subgroup of $G/G'$ that is isomorphic to
a subgroup of $\Sp_{2t}(r)$. Since $p\neq r$, then $p$ divides the cardinality
$|\Sp_{2t}(r)|$ if and only if $p$ divides $\prod_{i=1}^{t}(r^{2i}-1)=(r-1)(r+1)...(r^{t-1}-1)(r^{t-1}+1)(r^{t}-1)(r^{t}+1)$.
Observe that $r^{t-1}=r^{t}/r\leq n/2$. If $p\neq 2$, then  $r^{t-1}=n/r< n/2$
and $r^{t-1}+1< n/2+1$. If $p>n/2+1$, then $p$ divides no factor in the product
$\prod_{i=1}^{t-1}(r^{2i}-1)$. Thus $p$ could divide only the factors $r^{t}-1$ and $r^{t}+1$.
If $p\mid r^{t}-1$ and $p\mid r^{t}+1$, then $p|2r^{t}$ and this is a contraddiction
for every odd primes $p\neq r$. 
So for all $p\neq 2$, if $p|(r^i+1)$, then $p\nmid (r^i-1)$ and the other way around.
The greatest factor of $(r-1)(r+1)...(r^{t-1}-1)(r^{t-1}+1)(r^{t}-1)(r^{t}+1)$ is
$r^t+1=n+1$. If $p> n/2+1$, then $p^2> n^2/4+n+1>n+1$, for all $n$. Thus
 if $p>n/2+1$, we have that $p^2\nmid |\Sp_{2t}(r)|$ and the $p$-Sylow subgroup of
$G$ is either trivial or cyclic. Consequently $H^1_{\textrm{loc}}(G, \A[p])=0$.

\bigskip \noindent \emph{Class $\C_7$}

\bigskip\noindent

\par Suppose that $G$ is of class $\C_7$.  This case 
 occur only when $n=r^t$, where $r$ is a prime and $t>1$.  
The group $G$ is the stabilizer of a tensor product
decomposition $\bigotimes_{i=1}^{t}V_i$, with $n=r^t$, $t\geq 2$ and $\textrm{dim}(V_i)=r$, for every $1\leq i\leq t$.  Thus
$G$ is a subgroup of $\underbrace{ (\GL_r(q)\circ ... \circ \GL_r(q)) }_{t}.S_t $. The situation is similar to the one described
in Lemma \ref{tensor3}  (with
$j_0=0$, for every $1\leq i\leq t$), but here we can have more than one permutation of the $V_i's$. 
%This case is covered by Lemma \ref{tensor3}, when $j_i=0$, for all $\leq i\leq t$.
%Anyway, one can give an alternative proof, by considering that for 
%Let $Z(G)$ be the center of $G$. If $Z(G)$ is nontrivial, then we are in the situation described 
%in Corollary \ref{extension_sn_cor} and $H^1_{\loc}(G,\A[p])=0$. If $Z(G)$ is trivial, then
%$G$ has a representation in $PGL_n(q)$. By Steinberg's Theorem for Tensor Products, then
%$G$ acts on $\A[p]$ as described in the statement of Lemma \ref{tensor3} (see also
%Remark \ref{tensor4}). 
If $p>t$, then the $p$-Sylow subgroup of $G$
is contained in $G'=G\cap \underbrace{ (\GL_r(q)\circ ... \circ \GL_r(q)) }_{t}$. Observe that $G'=G_1\circ ... \circ G_t$, where $G_i\leq GL_r(t)$ 
is a normal subgroup
of $G$,  acting on $V_i$, for every $1\leq i\leq t$. Being $\A[p]$ a very strongly irreducible $G$-module, we have that
$\A[p]$ is also a very strongly irreducible $G_i$-module, for every $1\leq i\leq t$. 
Furthermore $V_i$ is a very strongly irreducible $G_i$-module (in fact if $V_i$ has a proper
$G_i$-submodule, then $\A[p]$ has a proper $G_i$-submodule).  
Observe that $G'$ acts on $\A[p]$ as
described in Corollary \ref{tensor_2}.  Since $G_i$ acts on $V_i$, then
we  use induction on $r$ and we have $H^1_{\textrm{loc}}(G_i,V_i)=0$, for all $p>\ddfrac{r}{2}+1$, for every $1\leq i\leq t$. 
By Corollary \ref{tensor_2} we have $H^1_{\textrm{loc}}(G',\A[p])=0$, for all $p>  \max\{t,\ddfrac{r}{2}+1\}$.  
Obviously $r<n$ and $t\leq n/2$, being $t=\log_r(n)$. Then if we take  $p> n/2+1$, we still have that the $p$-Sylow subgroup of $G$ is contained in $G'$
and $H^1_{\textrm{loc}}(G',\A[p])=0$. This implies $H^1_{\textrm{loc}}(G,\A[p])=0$, for every $p>n/2+1$, by Lemma \ref{Sylow}.

\bigskip \noindent \emph{Class $\C_8$}

\bigskip\noindent

\par Suppose that $G$ is of class $\C_8$. If $n$ is even, then $G$ is contained
either in the group $\Sp_n(p^m)$, or in a group $\textrm{O}_n^{\epsilon}(p^m)$,
for some $\epsilon\in\{+,-\}$, or in the group $U_n(p^{\frac{m}{2}})$, with $m$ even too.
If $n$ is odd, then $G$ is contained
either in $\textrm{O}_n(p^m)$, or in $U_n(p^{\frac{m}{2}})$ (with $m$ even).
Observe that for symplectic groups  $\Sp_n(p^m)$, we have $n=2t$, for some positive integer $t$
and then $n/2+1=t+1$. If $G$ is one of the whole mentioned classical groups, then we may apply Lemma \ref{whole}
to get $H^1_{\textrm{loc}}(G,\A[p])=0$. 
%$U_n(p^{\frac{m}{2}})$, then its $p$-Sylow subgroup 
%$G_p$ coincides with the $p$-Sylow subgroup of $\GL_n(p^m)$, i.e. the $p$-Sylow subgroup of $\SL_n(p^m)$. 
%By Lemma \ref{whole},
%we have $H^1_{\textrm{loc}}(G,\A[p])=0$. If $G$ is the whole symplectic group
%$\Sp_n(p^m)$ (with n even) or one of the whole orthogonal groups, then contains
%$-I$ and $H^1_{\textrm{loc}}(G,\A[p])=0$, by Lemma \ref{scalar}.
Assume that $G$ is strictly contained in one of those classical groups. 
Aschbacher's theorem holds for unitary, symplectic and orthogonal groups too and the maximal subgroups of those classical groups are still divided
in the same 9 classes (see \cite{KL}).  
From the classification of the maximal subgroups of $\Sp_n(p^m)$, $\textrm{O}_n(p^m)$, $\textrm{O}_n^{\epsilon}(p^m)$ and $U_n(p^{\frac{m}{2}})$
of class $\C_i$, $i\neq 9$  appearing in \cite[Table 3.5B, Table 3.5C, Table 3.5D and Table 3.5E]{KL}, we have that $\textrm{O}_n(p^m)$, 
$\textrm{O}_n^{\epsilon}(p^m)$ and $U_n(p^{\frac{m}{2}})$ do not contain groups of class $\C_8$ and
that the subgroups of $\Sp_n(p^m)$ of class $\C_8$ are $\textrm{O}_n^{\epsilon}(p^m)$ themselves.
Since we are assuming that $G$ is strictly contained in one of those three groups, then it is a subgroup of class $\C_i$, for
some $i\neq 8$.  By repeating the arguments used for the maximal subgroups of $\SL_n(q)$ of class $\C_i$, with $i\neq 8$ (see Part \emph{ii} below for class $\C_9$),
for the maximal subgroups of symplectic, orthogonal and unitary groups, we get the conclusion.

\bigskip
\noindent \emph{Part ii. Subgroups of class $\C_9$}

\bigskip
\par Suppose that $G$ is of class $\C_9$ and let $Z(G)$ be its center. By the description of the subgroups of class $C_9$, recalled in Section \ref{sec1}, the group $G/Z(G)$ is almost simple. Assume that the $p$-Sylow subgroup $Z(G)_p$ of $Z(G)$ is nontrivial.
We recall that one of the $p$-Sylow subgroups of $\GL_n(p^m)$ is the subgroup ${\mathfrak{U}}_n$ of the upper triangular matrices with
all entries on the principal diagonal equal to 1. Let $M_1\in Z(G)_p$.  All the $p$-Sylow subgroups of $G$
are conjugate, 
then there exist $g\in \GL_n(p^l)$  and $M_2 \in{\mathfrak{U}}_n$ such that $M_1=gM_2 g^{-1}$.
This is equivalent to $gM_1g^{-1}=M_2$ and since $M_1$ commutes with all $g\in G$, then
$M_1=M_2$. Thus $M_1$ is in upper triangular form too. Now let $h\in G$. Again, since $M_1$
is in the center of $G$, then $hM_1=M_1h$. This implies that $h$ itself should be in triangular
form, which is a contraddiction with our assumption that $\A[p]$ is an irreducible $G$-module. Therefore $Z(G)_p$ is
trivial implying $H^1_{\loc}(Z(G),\A[p])=0$.  It is well-known that the restriction map

$$H^1(Z(G),\A[p])\rightarrow H^1(Z(G)_p,\A[p])$$

\noindent is injective on the $p$-primary part of $H^1(Z(G),\A[p])$ (see for example \cite[Theorem 4, Chap. IX, \S 2]{Ser3}).
Since $\A[p]\simeq (\ZZ/p\ZZ)^n$ is a $p$-group, the $p$-primary part of $H^1(Z(G),\A[p])$ is the whole group.
Then $H^1(Z(G)_p,\A[p])=0$ implies $H^1(Z(G),\A[p])=0$. By the
inflation-restriction exact sequence

$$ 0\rightarrow H^1(G/Z(G),\A[p]^{Z(G)})\rightarrow H^1(G,\A[p])\rightarrow H^1(Z(G),\A[p])^{G/Z(G)}, $$

\noindent we have $H^1(G/Z(G),\A[p]^{Z(G)})\simeq H^1(G,\A[p])$ and, in particular
 $H_{\loc}^1(G/Z(G),\A[p]^{Z(G)})\simeq H_{\loc}^1(G,\A[p])$.  
So it suffices to prove $H^1_{\loc}(G/Z(G),\A[p]^{Z(G)})=0$.
Since $Z(G)$ is a normal subgroup of $G$, with the same argument used in the
proof of Lemma \ref{whole}, we have that $\A[p]^{Z(G)}$ is a $G$-submodule
of $\A[p]$.
%Let $z\in Z(G)$, $\sigma\in G$ and $W\in \A[p]^{Z(G)}$. We have that
%$z\sigma(W)=\sigma z(W)= \sigma (W)$, implying $\sigma(W)\in \A[p]^{Z(G)}$. Thus 
%$\A[p]^{Z(G)}$ is a $G$-submodule of $\A[p]$. 
Being $\A[p]$ is an irreducible $G$-module, we deduce that $\A[p]^{Z(G)}$
is either trivial or it is the whole $\A[p]$. In the first case obviously
 $H_{\loc}^1(G/Z(G),\A[p]^{Z(G)})=0$. If $\A[p]^{Z(G)}=\A[p]$, then
we have to prove that  $H_{\loc}^1(\widetilde{G},\A[p])=0$, where $\widetilde{G}=G/Z(G)$
is an almost simple group. To ease notation, we will prove $H^1_{\loc}(G,\A[p])=0$,
whenever $G$ is an almost simple group.
In this case $G$ contains a simple group $S$ and it is contained in the automorphism group of $S$
\begin{equation} \label{almost_simple} S\leq G\leq \Aut(S). \end{equation} In particular, if $G$ is a subgroup of $\GL_n(p^m)$, then $S$ is a subgroup of $\GL_n(p^m)$ too.
So we will consider the possible simple groups $S$ contained in $\GL_n(p^m)$, for a certain $n$. Observe that $S$ is not a cyclic group by
\eqref{almost_simple}.
\noindent The classification of the finite simple groups is well-known, as well as the list of their automorphisms
groups. One of the most complete references in the literature is Wilson's book on finite simple groups \cite{Wil}. Following that text, 
we will divide the simple groups in four classes: alternating groups, sporadic groups,  classical groups and exceptional groups.
We will consider the twisted exceptional group (i.e. the Ree groups,
the Suzuki groups, the group $^3D_4(q)$ and the group $^2E_6(q)$) among the
exceptional groups.
We will also call groups of Lie type the classical and the exceptional groups.  \par

\bigskip \noindent \emph{Alternating groups}

\bigskip\noindent Assume that $S$ is an alternating group $A_N$, for some positive integer $N$. Since
the cardinality of the outer automorphism group of $A_N$ divides 4 for all $N$, then we may assume without loss of generality that $G=S$. By \cite[Proposition 5.3.7 (i)]{KL},
we have that the minimal degree for a  representation of  $A_N$ in $\GL_n(p^m)$, for $n\geq 9$, is $N-2$, i.e. $n\geq N-2$. Then
$N\leq n+2$. Since $|A_{n+2}|=\ddfrac{(n+2)!}{2}$, if  $p> n/2+1$, then
$p^2\nmid |A_{n+2}|$. In  particular $p^2$ does not divide the cardinality of every possible
subgroup of $\GL_n(p^m)$ of type $C_9$ isomorphic to $A_N$, for every  positive integer $N$ (for $n\geq 9$). 
We have $H^1_{\loc}(G,\A[p])=0$, for all $p> n/2+1$, $n\geq 9$. We will 
treat the case when $n\leq 8$ in Part \emph{iii.} below, showing that for all $n$ the bound
$p> n/2+1$ is indeed sufficient for the triviality of  $H^1_{\loc}(G,\A[p])$.

\par\bigskip \noindent \emph{Sporadic groups}

\bigskip\noindent Assume that $S$ is a sporadic group. Then $p=13$ is the greatest prime number such
that $p^2$ could eventually divide its cardinality (this is the case of the Monster group). 
Furthermore, for every sporadic group, the outer automorphism group is either trivial or cyclic of order 2.
Then for $p> 13$, we have $H^1_{\loc}(G,\A[p])=0$.
 If $n\geq 24$,  the bound $p> n/2+1$ covers the case of sporadic groups too, as well as the cases of the groups of geometric type and of the alternating groups. 
We will see in Part \emph{iii.} below that the same bound $p> n/2+1$ assures the triviality of  $H^1_{\loc}(G,\A[p])$, when $G$ is a sporadic group, 
for all $n\leq 23$.
\par
\bigskip \noindent \emph{Groups of Lie type}

\bigskip\noindent Now assume that $S$ is neither
alternating, nor sporadic. If $p>3$, then the automorphisms of the field $\FF_{p^{m}}$, generated by the Frobenius map $\f: x\mapsto x^p$, are 
the only automorphisms of $S$, whose order can be divided by $p$ (see for instance \cite{Wil}).
The Frobenius automorphisms form a group of outer automorphisms
of $S$ isomorphic to $C_{m}$.  Then we have outer automorphisms of $S$ with order divided by $p$  if and only if $p\mid m$ and we
 may assume  $G\simeq S.C_m$. 
Being $\A[p]$ very strongly irreducible, the vanishing of
$H^1_{\textrm{loc}}(S,\A[p])$ implies the vanishing of $H^1_{\textrm{loc}}(G,\A[p])$,
by Lemma \ref{extension} and Remark \ref{extension_rem}. So it suffices to prove $H^1_{\textrm{loc}}(S,\A[p])=0$, for all
groups $S$ of Lie type, whenever $p> n/2+1$. We are going to consider two distinct situations: when
the characteristic of the field of definition of $S$ is different from $p$ (the so-called cross characteristic case in the literature) and when
the characteristic of the field of definition of $S$ is equal to $p$ (the so-called defining characteristic case).

\par
\vskip 1cm \noindent \emph{Cross characteristic case}

\bigskip\noindent
If the characteristic of the field of definition of $S$ is different from $p$, then 
we have an explicit lower bound for the degrees of the representations of $S$, as one can see 
in \cite[Table 5.3.A, pag. 188]{KL} (see also \cite{Land} and  \cite{Hoff}). For instance,
if $S$ is isomorphic to $\PSL_2(r^{\alpha})$,  for some odd prime $r$ and some positive integer $\alpha$,
then $n\geq \ddfrac{r^{\alpha}-1}{2}$  (by \cite[Table 5.3.A, pag. 188]{KL}).
In this case $r^{\alpha}\leq 2n+1$. 
Being $r\neq p$,  if a prime  $p$ does not divide  $(r^{{\alpha}}+1)(r^{{\alpha}}-1)$, then $p$ does not
divide the cardinality 
of $\PSL_2(r^{\alpha})$. 
Observe that  
every odd prime $p\neq r$ that divides  $r^{{\alpha}}+1$ does not divide $r^{{\alpha}}-1$ and the other way around (as in the case of groups of class $\C_6$).
Moreover $r^{{\alpha}}+1\leq 2n+2$.
Suppose  $p> n/2+1$. Then $p^2> n^2/4+n+1$. We have $n^2/4+n+1> 2n+2$, for all $n> 4$.
Therefore the $p$-Sylow  subgroup of $S$ is either trivial or cyclic and $H^1_{\loc}(S,\A[p])=0$, for every $n>4$.
We will treat the case when $n=4$ in Part \emph{iii.} below.
\par  If $S$ is isomorphic to $\PSL_2(2^{\alpha})$, the bound for $n$ is $n\geq 2^{\alpha}-1$ (see again  \cite[Table 5.3.A, pag. 188]{KL}).
With analogous arguments as for odd primes $r$, if $p> n/2+1$, then $p^2$ does not divide $(2^{{\alpha}}+1)(2^{{\alpha}}-1)$, for all $n\geq 4$
and $H^1_{\loc}(G,\A[p])=0$. 
\par Now suppose  $S=\PSL_t(r^{\alpha})$, for some $t\leq n$ and $r\neq p$. 
The  bound 
for $n$ is $n\geq (r^{\alpha})^{t-1}-1$ \cite[Table 5.3.A, pag. 188]{KL}. Then $(r^{\alpha})^{t-1}\leq {n+1}$, implying
$t-1\leq \log_{r^{\alpha}}(n+1)$. Observe that $n-1> \log_{r^{\alpha}}(n+1)$, for all $n\geq 4$. Thus
$t\leq n-1$. On the other hand, the bound $n\geq (r^{\alpha})^{t-1}-1$ implies
$r^{\alpha}\leq \sqrt[t-1]{n+1}$.
As above, since $p\neq r$, if
$p\nmid \prod_{i=2}^{t}((r^{\alpha})^i-1)$,  then $p$ does not divide the cardinality of $\PSL_t(r^{\alpha})$.
Let $p> n/2+1$. Since $n\geq (r^{\alpha})^{t-1}-1$, we have $p> \ddfrac{n}{2}+1\geq \ddfrac{(r^{\alpha})^{t-1}-1}{2} +1$.
Observe that  $\ddfrac{(r^{\alpha})^{t-1}-1}{2}+1>(r^{\alpha})^i-1$, for all $i\leq t-2$. Therefore all $p> n/2+1$ do not divide each
factor in the product $\prod_{i=2}^{t-2}((r^{\alpha})^i-1)$. Consider the last two factors $(r^{\alpha})^{t-1}-1$ and $(r^{\alpha})^{t}-1$.
If $p| (r^{\alpha})^{t-1}-1$ and $p|(r^{\alpha})^{t}-1$, then $p|(r^{\alpha})^{t-1}(r^{\alpha}-1)$. 
Since $p\neq r$ and we have proved $p> r^{\alpha}-1$, then $p$ divides at most one factor
in the product $\prod_{i=2}^{t}((r^{\alpha})^i-1)$.
By $r^{\alpha}\leq \sqrt[t-1]{n+1}$, we have $(r^{\alpha})^{t-1}-1\leq \sqrt[t-1]{(n+1)^{t-1}}-1=n$.
If $p> n/2+1$, then $p^2> \ddfrac{n^2}{4}+n+1>n>\ddfrac{((r^{\alpha})^{t-1}-1)^2}{4}+(r^{\alpha})^{t-1}$. 
One can easily verify that $\ddfrac{((r^{\alpha})^{t-1}-1)^2}{4}+(r^{\alpha})^{t-1}\geq (r^{\alpha})^{t}-1$, for all $r$ when
$t>3$ and for all $r^{\alpha}\geq 4$ when $t=3$. In these cases $p^2\nmid | \PSL_t(r^{\alpha})|$ and
then the $p$-Sylow subgroup of $S$ is either trivial or cyclic, implying $H^1_{\loc}(S,\A[p])=0$.
We are left with the cases when $t=3$ and $r\leq 3$. If $r=2$, we have $\prod_{i=2}^{t}((r^{\alpha})^i-1)=
(2^2-1)(2^3-1)=3\cdot 7$. Again, for every $p\neq 2$, the $p$-Sylow subgroup of $S$  is either trivial or cyclic, implying $H^1_{\loc}(S,\A[p])=0$.
If $r=3$, then $\prod_{i=2}^{t}((r^{\alpha})^i-1)=
(3^2-1)(3^3-1)=2^4\cdot 13$. As above $H^1_{\loc}(S,\A[p])=0$, for all $p\neq 2$.
\par Now suppose $S=\PUni_t(r^{\alpha})$, for some $t\leq n$ and $r\neq p$. To ease notation, from now on let $r^{\alpha}=w$.
When $n$ is odd, the  bound is $n\geq w\ddfrac{w^{t-1}-1}{w+1}$,  and, when $n$ is even, the bound is
$n\geq \ddfrac{w^{t}-1}{w+1}$
 \cite[Table 5.3.A, pag. 188]{KL}. Suppose that $n$ is odd. 
Observe that  $\ddfrac{w}{w+1} \geq \ddfrac{2}{3}$, for every $w$.
 If a prime $p$ does not divide $\prod_{i=2}^{t}(w^i-(-1)^i)$, then it does not divides the cardinality of
 $S=\PUni_t(w)$. 
Let $p> n/2+1$.  Owing to  $n\geq w\ddfrac{w^{t-1}-1}{w+1}$ and $\ddfrac{w}{w+1} \geq \ddfrac{2}{3}$, we get
$p> \ddfrac{n}{2}+1\geq  \ddfrac{w}{2}\ddfrac{w^{t-1}-1}{w+1} +1\geq \ddfrac{w^{t-1}-1}{3} +1$.
Observe that $\ddfrac{w^{t-1}-1}{3} +1\geq w^{t-3}+1$, for all $w$ and all $t\geq 3$.
Thus, in particular   $\ddfrac{w^{t-1}-1}{3} +1\geq w^{t-3}-(-1)^{t-3}$, for all $w$ and all $t\geq 3$.  We consider separately the cases when $t\neq 2$
and $t=2$. Assume first that $t\neq 2$.
We have observed that $p$ does not divide the product $\prod_{i=2}^{t-3}(w^i-(-1)^i)$.
We are left with the factors $w^{i}-(-1)^{i}$, for $t-2\leq i\leq t$. Suppose that
$p$ divides both $w^{t-1}-(-1)^{t-1}$ and $w^{t}-(-1)^{t}$. 
Then $p$ divides
$w^{t-1}-(-1)^{t-1}+w^{t}-(-1)^{t}= w^{t-1}+w^{t}= w^{t-1}(1+w)$.
Being $p\neq r$, then $p\nmid w$ and $p| 1+w$. In particular $p\leq 1+w$, i.e. $w\geq p-1$.
Thus $p> \ddfrac{n}{2}+1\geq  \ddfrac{w}{2}\ddfrac{w^{t-1}-1}{w+1} +1\geq  \ddfrac{p-1}{2}\ddfrac{(p-1)^{t-1}-1}{p} +1$,
if and only if $p-1>  \ddfrac{p-1}{2}\ddfrac{(p-1)^{t-1}-1}{p}$, if and only if $\ddfrac{(p-1)^{t-1}-1}{2p}< 1$.
Since $t\neq 2$ this is a contradiction (recall $p>3$). In the same way one can prove that $p$ cannot
divide both $w^{t-2}-(-1)^{t-2}$ and $w^{t-1}-(-1)^{t-1}$. Suppose that  $p$ divides both $w^{t-2}-(-1)^{t-2}$ and $w^{t}-(-1)^{t}$. 
Then $p$ divides $w^{t}-(-1)^{t}-(w^{t-2}-(-1)^{t-2})= w^{t}-w^{t-2}=w^{t-2}(w^2-1)=w^{t-2}(w+1)(w-1)$.
Being $p\neq r$, then either $p$ divides $w+1$ or $p$ divides $w-1$. 
In both cases $p\leq 1+w$ and we have already shown that this is a contradiction.
Thus every $p>n/2+1$ cannot divide more than one factor in
the product  $\prod_{i=t-2}^{t}((w)^i-(-1)^i)$ and in particular $p$ could divide at most one of the factors with
$t-2\leq i\leq t$.  We are going to prove that $p^2$ divides no factor in that
product. 
We have $w^i-(-1)^i\leq w^t+1$, for each $t-2\leq i\leq t$. 
Since $n\geq w\ddfrac{w^{t-1}-1}{w+1}$, then $p> n/2+1$ implies
 $p^2> n^2/4+n+1\geq  \ddfrac{1}{4}\left(w\ddfrac{w^{t-1}-1}{w+1}\right)^2+ w\ddfrac{w^{t-1}-1}{w+1}+1$.
One can easily verifies that $ \ddfrac{1}{4}\left(w\ddfrac{w^{t-1}-1}{w+1}\right)^2+ w\ddfrac{w^{t-1}-1}{w+1}+1 \geq w^t+1$,
for all $w$, when $t\geq 4$ and for all $w\geq 7$, when $t=3$. In these cases
$p^2$ does not divide the cardinality of $\PUni_t(w)$ and the $p$-Sylow subgroup of $S$
is either trivial or cyclic, implying $H^1_{\loc}(S,\A[p])=0$. We are left
with the groups $\PUni_3(2)$, $\PUni_3(3)$, $\PUni_3(4)$ and $\PUni_3(5)$.
By computing their cardinalities, one easily verifies that the $p$-Sylow
subgroup of $S$ is trivial or cyclic, for all $p> 3$. Thus  $H^1_{\loc}(S,\A[p])=0$.
Assume that $t= 2$, then  $\prod_{i=2}^{t}((w)^i-(-1)^i)= w^2-1=(w-1)(w+1)$.
Since $p\neq r$ and $p\neq 2$, then $p$ divides at most one factor in the product. 
Moreover,  the bound for $n$ is $n\geq w\ddfrac{w-1}{w+1}$. Thus
$p>n/2+1$ implies  $p^2> \ddfrac{w^2}{4}\ddfrac{(w-1)^2}{(w+1)^2}+w\ddfrac{w-1}{w+1}+1$.
$\ddfrac{w^2}{4}\ddfrac{(w-1)^2}{(w+1)^2}+w\ddfrac{w-1}{w+1}+1\geq w+1$ if and only if
$\ddfrac{w}{4}\ddfrac{(w-1)^2}{(w+1)^2}+\ddfrac{w-1}{w+1}\geq 0$ and this is obviously
true for all $w$. So  $H^1_{\loc}(S,\A[p])=0$.
If $n$ is even the bound is $n\geq \ddfrac{w^{t}-1}{w+1}$.  With reasonings very much akin to the ones used for odd $n$, one can 
 get  $H^1_{\loc}(S,\A[p])=0$. \par  The other minimal bounds for
$n$ appearing in \cite[Table 5.3.A, pag. 188]{KL}, when $S$ is a classical group in cross characteristic, are very similar
to the ones already discussed. So, again with arguments  that are very much akin to the ones already shown when $S$ is the projective special linear group or the projective unitary group, 
one can verify that $H^1_{\loc}(S,\A[p])=0$, for all $p> n/2+1$, whenever $S$ is a classical group of Lie type in cross characteristic. 

\par Assume that $S$ is the exceptional group $E_8(w)$. In this case the lower bound for the dimension of the representation is $n\geq w^{27}(w^2-1)$ (again \cite[Table 5.3.A, pag. 188]{KL}). Thus $w^{27}\leq \ddfrac{n}{w^2-1}\leq \ddfrac{n}{3}$, i.e. $w\leq \sqrt[27]{\ddfrac{n}{3}}$. A prime $p\neq r$ divides the cardinality
of $E_8(w)$   if and only if it divides the product $\prod_{i=0}^{3}(w^{6i+2}-1)\prod_{i=2}^{5}(w^{6i}-1)$. We have
$\prod_{i=0}^{3}(w^{6i+2}-1)\prod_{i=2}^{5}(w^{6i}-1)$. %\leq \prod_{i=0}^{3}(\sqrt[27]{(\ddfrac{n}{3})^{6i+2}}-1)\prod_{i=2}^{5}(\sqrt[27]{(\ddfrac{n}{3})^{6i}}-1)$. 
Let $p> n/2+1$. Then $p>  \ddfrac{w^{27}(w^2-1)}{2}+1$. Observe that  $\ddfrac{w^{27}(w^2-1)}{2}+1\geq w^{24}-1$.
Then $p$ can divide only the greatest factor $w^{30}-1$ in the product $\prod_{i=0}^{3}(w^{6i+2}-1)\prod_{i=2}^{5}(w^{6i}-1)$. 
Since $w\leq \sqrt[27]{\ddfrac{n}{3}}$, we have
$w^{30}-1\leq  \sqrt[27]{(\ddfrac{n}{3})^{30}}-1$.
As above, if $p> n/2+1$, then $p^2> n^2/4+n+1$. We want to show that $\ddfrac{n^2}{4}+n+1 \geq \sqrt[27]{(\ddfrac{n}{3})^{30}}-1$, for all $n\geq 4$.
The inequality $\ddfrac{n^2}{4}+n+1 \geq \sqrt[27]{(\ddfrac{n}{3})^{30}}-1$ is equivalent to 
$(\ddfrac{n^2}{4}+n+2)^{27} \geq (\ddfrac{n}{3})^{30}$. 
Clearly $(\ddfrac{n^2}{4}+n+2)^{27} - (\ddfrac{n}{3})^{30}\geq 0$, for all $n\geq 4$.
Thus $p^2>  \sqrt[27]{(\ddfrac{n}{3})^{30}}-1 \geq w^{30}-1$. We have again that
 the $p$-Sylow subgroup of $S$ is either trivial or cyclic and
$H^1_{\loc}(S,\A[p])=0$.
In similar ways, using the bounds in \cite[Table 5.3.A, pag. 188]{KL}, one sees that the assumption $p> \ddfrac{n}{4}+1$
is always sufficient to get the conclusion  $H^1_{\loc}(S,\A[p])=0$, when  $S$ is an exceptional
group of Lie type in cross characteristic. 

\par
\bigskip \noindent \emph{Defining characteristic case}

\bigskip

\noindent Assume that the characteristic of the field of definition of $S$ is $p$. 
We have that $S$ is a classical group of Lie type (i.e. a projective classic matrix group), with dimension $t<  n$ (where $t=2h$, for some positive integer $h$,
when $S=\PSp_t(q)$ is a projective symplectic group or $S$ is a projective orthogonal group of type $+$ or $-$), 
or an exceptional group of Lie type  (see \cite[Table 5.4.C, pag. 200]{KL}). If $S$ is a classical group of Lie type
of dimension $t<  n$,  we have already showed in the proof of Lemma \ref{whole} that $H^1_{\loc}(S,\A[p])=0$, for all $p>n/2+1$.
Assume that $S$ is an exceptional group of Lie type. By \cite{CPS} (see in particular Table (4.5) at page 186) and \cite{CPS2} 
(see in particular Table (4.3) at page 193), the cohomology
group $H^1(S,\A[p])$ is trivial, for all $p>3$, when $n$ is the possible minimal degree of the representation of $S$
and when $n$ is the dimension of the Lie algebra with automorphism group $S$ (in this case $S$ has a natural
representation in dimension $n$ and  this often coincides with the representation of $S$ with minimal degree).
If the representation of $S$ is neither the minimal nor the natural one, then we can proceed as in the proof of Lemma \ref{whole}. 
Here we repeat the same arguments used in that proof for the reader's convenience, since some of the bounds for $p$ are different
for the exceptional groups.  
We first consider the groups $S$ that
are not twisted. Let
$L(\lambda)$ denote the irreducible $G$-module of highest weight $\lambda$.  
In \cite[Theorem 1.2.3]{Georgia} the authors prove that  the first cohomology group
$H^1(S,L(\lambda))$ is trivial, for all $p>3$, when $S$ is one of the groups $E_6(q)$ or
$G_2(q)$ and $\lambda$ is a fundamental dominant weight (or it is less than a fundamental dominant weight). 
In \cite[Theorem 1.2.3]{Georgia} the authors also prove that $H^1(S,L(\lambda))$ is trivial, for all $p>31$, when $S=E_8(q)$  
and $\lambda$ is a fundamental dominant weight (or it is less than a fundamental dominant weight).
If $p>n/2+1$ with $p<31$, then $n<2p-2<60$. Since the minimal degree for the representation of $E_8(q)$ is $248$ (see \cite[Table5.4.C, pag. 200]{KL}), then
 $H^1(S,L(\lambda))$ is trivial, for all $p>n/2+1$. If $S$ is not one of the groups $E_6(q)$, $G_2(q)$ and $E_8(q)$ in  \cite[Theorem 1.2.3]{Georgia}
there are other bounds for $p$ (similar to $p>31$ when $S=E_8(q)$) to have the triviality of $H^1(S,L(\lambda))$,
with $\lambda$ a a fundamental dominant weight (or less than a fundamental dominant weight). 
With very easy computations, akin to that for $S=E_8(q)$, one sees that
$H^1(S,L(\lambda))$ is trivial, for all $p>n/2+1$, when $S$ is every possible exceptional group of Lie type,
not twisted, and $\lambda$ is a fundamental dominant weight (or it is less than a fundamental dominant weight). 
 As mentioned above,
since we are assuming that $\A[p]$
is irreducible, then $\A[p]=L(\lambda)$, for
some dominant weight $\lambda$ (see \cite{His} and \cite{Che}). Again we have a decomposition $L(\lambda)=\otimes_{i=1}^s L(\omega_i)$, where
$s$ is a positive integer and $\omega_i$ is a fundamental dominant weight, for every $1\leq i\leq s$.
Thus $H^1_{\loc}(S,\A[p])\simeq H^1_{\loc}(S, \otimes_{i=1}^s L(\omega_i))$, for certain fundamental
weights $\omega_i$ and the group $S$ acts on  a tensor product decomposition. 
In particular $S$ acts on
$\A[p]$ in the same way as the subgroups of class $\C_4$ (see \cite[\S 4.4]{KL}) or more 
generally the group $S$ acts on $\A[p]$ as prescribed by Steingberg's Tensor Product Theorem (see \cite{Ste}).
The same situation happened in a part of the proof of Lemma \ref{whole}. We have $S=\otimes_{i=1}^s S_i$,
with $S_i$ acting on $L(\omega_i)$, for every $i$.
Since the mentioned Theorem 1.2.3 in \cite{Georgia} assures the triviality of $H^1(S_i,L(\omega_i))$, for all $1\leq i\leq s$, 
 by applying Lemma \ref{tensor3} (and Remark \ref{tensor4}),  one deduces  the triviality  of $H^1_{\loc}(S,\A[p])$.
We have to prove the same conclusion for twisted groups of Lie type. If we assume that $p>3$,
then we have neither Suzuki groups nor Ree groups in the defining characteristic (see \cite{Wil} for further details).
We are left with groups $^2E_6(q)$ and $^3D_4(q)$. 
 The group $^2E_6(q)$ is a subgroup of $E_6(q^2)$ modulo scalars (see \cite[4.11]{Wil}).
We may apply Shapiro's Lemma (see for instance \cite[Theorem 4.19]{Neu} or \cite[Lemma 6.3.2 and Lemma 6.3.4]{Wei}) to get

 $$H^1\left(E_6(q^2),\textrm{Ind}_{^2E_6(q)}^{E_6(q^2)}\A[p]\right)\simeq H^1(^2E_6(q), \A[p]),$$

\noindent with $\textrm{Ind}_{^2E_6(q)}^{E_6(q^2)}$   denoting   the   induced    $G$-module
   $\bigoplus_{i=1}^s \sigma_i(\A[p])$,   where 
$s:=[E_6(q^2):^2E_6(q)]$ is the index of $^2E_6(q)$ in $E_6(q^2)$ (see \cite[Definition 4.18]{Neu}),
$\sigma_i$ varies in a system of left coset  representatives of $H$ in $G$
and $\sigma_i(\A[p])$ is isomorphic to $\A[p]$, for every $1\leq i\leq s$. 
Being $H^1(G,-)$ and additive functor for every group $G$, we get

$$\bigoplus_{i=1}^s H^1(E_6(q^2)  , \sigma_i(\A[p]))\simeq  H^1(^2E_6(q), \A[p]).$$

\noindent We have already proved that  $H^1_{\loc}(E_6(q^2), \A[p])=0$, under the assumption that $\A[p]$ is
irreducible. Therefore $H^1_{\loc}(E_6(q^2)  , \sigma_i(\A[p]))=0$, for all $1\leq i\leq s$, and 
$H^1_{\loc}(^2E_6(q), \A[p])=0$.
The group $^3D_4(q)$  is a subgroup of $\Omega_8^+(q^3)$. 
The cited results in \cite{CPS} and in \cite{Georgia} hold for $\Omega_8^+(q^3)$.
Then we may apply  all the arguments as above to get $H^1_{\loc}( \Omega_8^+(q^3), \A[p])=0$
and deduce $H^1_{\loc}( ^3D_4(q), \A[p])=0$, by Shapiro's Lemma.

\bigskip  
%To finish the proof we have to control that $p> n/2+1$ is a sufficient bound for the subgroups of class $\C_9$ of $\SL_n(q)$, when $4\leq n\leq 25$. 

\vskip 1cm
\noindent \emph{Part iii. The cases when $4\leq n\leq 23$}
\bigskip

In accordance with Part \emph{i.}, the bound $p> n/2+1$ is sufficient, whenever $G$ is a subgroup of class $\C_i$, with $1\leq i\leq 8$.
In Part \emph{ii.} we showed that for all $n>4$ the bound $p> n/2+1$ is sufficient, whenever $G$ is a subgroup of class $\C_9$, which is
neither an alternating group,  nor a sporadic group. In addition,
If $G$ is an alternating group or a sporadic group, we have proved that the same bound works
respectively for all $n\geq  9$ and for all $n\geq 24$. Thus for $n=4$ we are going  to control case by case
that the bound $p>n/2+1$ is sufficient for groups of class $\C_9$; in addition for $5\leq n\leq 8$ and $5\leq n\leq 23$
we are going to control that the same bound works respectively for alternating groups and sporadic groups.

\begin{description}

\item[ ] \emph{ $n=4$ } \par

 Let $d:=\gcd(q-1,4)$.
 By the classification of the maximal subgroups of  $\SL_4(q)$ appearing in \cite[Table 8.9, pag. 381]{BHR}, we have the following 
maximal subgroups  of class $\C_9$.

\begin{description}

\item[(a)]   the group $A_7$, only if $q=p=2$;
    \item[(b)]   the group $C_d\circ C_2\hspace{0.1cm}^{\cdot}\PSL_2(7)$, for $q=p\equiv 1,2,4 \modn 7)$, $p\neq 2$;
        \item[(c)]  the group $C_d\circ C_2\hspace{0.1cm}^{\cdot}A_7$, for $q=p\equiv 1,2,4 \modn 7)$, $p\neq 2$;
            \item[(d)]   the group $C_d\circ C_2\hspace{0.1cm}^{\cdot}\textrm{U}_4(2)$, for $q=p\equiv 1 \modn 6)$.

\end{description}

In cases \textbf{a)}, \textbf{(b)}, \textbf{(c)} and \textbf{(d)}
the $p$-Sylow subgroups of $G$ is either trivial or cyclic for all $p\geq 3$. 
 Then we get the conclusion.

\item[ ] \emph{ $n=5$ } \par
We have to prove $H^1_{\loc}(G,\A[p])=0$, for all $p> 3$.
By the classification of the maximal subgroups of of $\SL_5(q)$  appearing in  \cite[Table 8.19, pag. 386]{BHR},
among the groups of class $\C_9$ there are no alternating groups and the only  sporadic group is the Mathieu group $M_{11}$ (only if $q=3$).
 Since $M_{11}$ has cardinality   $2^4 \cdot 3^2 \cdot 5 \cdot 11$, then 
 for every $p\geq 5$, the $p$-Sylow subgroup of $G$ is either trivial or cyclic.
So the bound $p>3$ works.

\item[ ] \emph{ $n=6$ } \par
We have to prove $H^1_{\loc}(G,\A[p])=0$, for all $p>3$.
The maximal subgroups of $\SL_6(q)$ of class $\C_9$ 
concerning the cases of alternating groups or sporadic groups are the ones appearing in the following list (see \cite[Table 8.25, pag. 389]{BHR})

\begin{description}

\item[(a)]   a group of type $\C_9$, the group $C_2\times C_3^{\cdot} A_6.C_2$; % if $p\equiv 1 \textrm{ (mod 24)}$;
  \item[(b)]  a group of type $\C_9$, the group $C_2\times C_3^{\cdot} A_6$; %, if $q=p\equiv 19 \textrm{ (mod 24)}$;
\item[(c)]  a group of type $\C_9$, the group $C_6^{\cdot} A_6$; %, if $q=p\equiv 19 \textrm{ (mod 24)}$;
\item[(d)]  a group of type $\C_9$, the group $C_6^{\cdot} A_7$; %, if $q=p\equiv 19 \textrm{ (mod 24)}$;
\item[(e)] a group of type $\C_9$, the group $C_2^{\cdot} M_{12}$. %, if $q=p\equiv 19 \textrm{ (mod 24)}$;
\end{description}

Some of the cases in the list can occur only under certain conditions of $q$. Since the proof does not depend on
those conditions, we avoided to write them, to ease the notation.

 One easily verifies that in all cases
\textbf{(a)}, \textbf{(b)}, \textbf{(c)},  \textbf{(d)} and \textbf{(e)}, the $p$-Sylow subgroup of $G$ is either trivial or cyclic, for all $p>3$.
Thus  $H^1_{\textrm{loc}}(G,\A[p])=0$, for every $p\geq 5$.

%\newpage
\item[ ] \emph{ $n=7$ and $ n=8$ } \par
In accordance with \cite[Table 8.36, pag. 395]{BHR} and  \cite[Table 8.45, pag. 399]{BHR}
for all $q$ there are neither alternating groups nor sporadic groups that are maximal subgroups respectively of $\SL_7(q)$ and $\SL_8(q)$.

\item[ ] \emph{ $n=9$ } \par
In accordance with \cite[Table 8.55, pag. 406]{BHR} there are no maximal subgroups of $\SL_9(q)$ of class $\C_9$ 
corresponding to the case of sporadic groups.

\item[ ] \emph{ $n=10$ } \par
Let $d:={\textrm{gcd}}(q-1,10)$. The maximal subgroups of $\SL_{10}(q)$ of class $\C_9$ corresponding to the case
of sporadic groups are  (see \cite[Table 8.61, pag. 410]{BHR})

\begin{description}
  \item[(d)]  $C_d{\circ}C_2^{\cdot}M_{12}$  (where $M_{12}$ is the Mathieu group of order $2^6 \cdot 3^3 \cdot 5 \cdot 11$), for $q=p \equiv 3 \textrm{ (mod 8)}$;
\item[(e)] $C_d{\circ}C_2^{\cdot}M_{12}.C_2$,  for $q=p \equiv 1 \textrm{ (mod 8)}$; 
 \item[(f)]  $C_d{\circ}C_2^{\cdot}M_{22}$  (where $M_{22}$ is the Mathieu group of order $2^7 \cdot 3^2 \cdot 5\cdot 7 \cdot 11$),  for $q=p \equiv 11,15,23 \textrm{ (mod 28)}$;
\item[(g)] $C_d{\circ}C_2^{\cdot}M_{22}.C_2$, for $q=p \equiv 1,9,25 \textrm{ (mod 28)}$; 
\end{description}

\noindent Observe that $d| 10$, but $d\neq 5$, when $p=5$. Then,  in  all the cases \textbf{(a)},  \textbf{(b)},    \textbf{(c)} and \textbf{(d)}, for all $p\geq 5$, 
the $p$-Sylow subgroup $G_p$ of $G$ is isomorphic to a subgroup of one of the groups
 $M_{12}$ and $M_{22}$, that have cardinalities respectively $2^6 \cdot 3^3 \cdot 5 \cdot 11$ and $2^7 \cdot 3^2 \cdot 5 \cdot 7 \cdot 11$. 
Thus $G_p$ is either trivial or cyclic, for all $p\geq 5$.  In particular we have $H^1_{\textrm{loc}}(G,\A[p])=0$, for all $p\geq 7$, as required.
\smallskip

\item[ ] \emph{ $11\leq n\leq 23$ } \par
In \cite{HM},  the authors list all the possible subgroups of class $\C_9$ of $\SL_n(q)$,
for every $n\leq 250$, excluding the groups of Lie type in the defining characteristic (see also \cite{HM2}). 
In particular they show all possible  maximal subgroups of $\SL_n(q)$ of class  $\C_9$ corresponding to the case of sporadic groups.
Proceeding as for $n\leq 10$, by analyzing the tables in \cite{HM}, one sees that even when $11\leq n\leq 23$ the
first local cohomology group $H^1_{\textrm{loc}}(G,\A[p])$ is trivial for all $p> n/2+1$.

\end{description}

\par We can conclude, that for all $n\geq 2$, when $p>\ddfrac{n}{2}+1$ the local-global divisibility
by $p$ holds in $\A$ over $k$.   \eop

%About the bounds for $p$ we can make the following considerations.
%We are going to make some considerations about the bounds for $p$.

%\bigskip \begin{rem}
%Observe that even if we are cosidering subgroups $G$ of $\GL_n(p)$ isomorphic to subgroups
%of $\SL_t(p^m)$, with $t<n$, the arguments used in the proofs do not depend on the
%representation. In fact, we are basically using Lemma \ref{whole} and considerations about
%the cardinality of the $p$-Sylow subgroup $G_p$ of $G$. In Lemma \ref{whole} we show
% the triviality of the local cohomology in the cases when $G$ is a whole classical groups
%with arguments that clearly do not depend on the representation of $G$ as a subgroup
%of $\GL_n(p)$. Moreover the cardinality of the $p$-Sylow subgroup $G_p$ do not depend
%on its representation too.
%\end{rem}

\bigskip  About the bound for $p$ we can make the following considerations.

\begin{rem}  \label{rem_sharp} \textbf{A likely sharp bound.}
 Looking at our proofs for $n\in\{2,3\}$ and $n\geq 4$ and looking at the tables listing all the maximal subgroups
of $\SL_n(q)$ for all $4\leq n\leq 12$ in \cite{BHR} and the tables listing all the maximal subgroups
of $\SL_n(q)$ except the groups of Lie type in the defining characteristic in \cite{HM}, one sees that the bound $p> n/2+1$
is probably sharp in many cases. In fact,  for $p\leq n/2+1$, the $p$-Sylow subgroup of $G$ could be  
a direct product of two cyclic groups $C_p$ (look for examples at the
$3$-Sylow subgroup of  of $A_7$, when $n=4$,  or at the $3$-Sylow subgroup $M_{11}$ when $n=5$, and so on).
When the $p$-Sylow subgroup $G_p$ of $G$ is isomorphic to $C_p^2$, the
local-global divisibility may fail as in the mentioned examples produced in \cite{DZ}, \cite{DZ3} and in \cite{Pal}, \cite{Pal2}, \cite{Pal3}.
In principle one could rise those examples to similar ones for all $n$. 
So the local-global principle for divisibility by $p$ could
fail for $p\leq n/2+1$, when $G_p\simeq C_p^2$. It is also interesting that the bound $p> n/2+1$ is what we need
to apply \cite[Theorem 1.2.3]{Georgia} in the proof of Lemma  \ref{whole}. In fact (as already stated for groups of class $\C_8$ that are
symplectic groups), when $n=2t$, then $n/2+1$ is exactly equal to the $t+1$ appearing in that proof. 
This gives an evidence that the bound $n/2+1$ should be necessary in
many cases. In fact that bound appears independently both in the part of the proof of our Theorem \ref{Pal17}
concerning Lie groups in cross characteristic and in the mentioned Theorem 1.2.3 in \cite{Georgia}.
Anyway for some $n$, the bound $p> n/2+1$ is not sharp; for instance 
for $n=10$, with a bit of computation one can see that the bound $p\geq 5$ is sufficient to get the conclusion
of Theorem \ref{Pal17}. In those cases $G_p$ is trivial or cyclic even for some
$p\leq  n/2+1$.   We will now replace the bound $p>n/2+1$ with a bound $p>p_n$, that is probably sharp, where $p_n$ is a prime less or equal than $n/2+1$, depending
only on $n$, for every $n$. Indeed, we will have that the $p_n$-Sylow subgroup of $G$ can be  isomorphic to $C_{p_n}^2$ and the local-global principle for divisibility by $p_n$ can fail. We cannot prove that the bound $p_n$
is really sharp, only because we cannot prove that  there exists a commutative algebraic group
$\A$ with a prescribed $p_n$-torsion subgroup $\A[p_n]$ such that $G_{p_n}$ is exactly a group  for which the
principle fails. We can only deduce from the proof of Theorem \ref{Pal17} that the group $G_{p_n}$ \emph{could} be isomorphic to $C_{p_n}^2$ and that 
this surely does not happen when $p> p_n$. By eventually changing the field of definition $k$, it is likely
that we can have $G_{p_n}\simeq C_{p_n}^2$. This still does not assure that $H^1_{\loc}(G_{p_n}, \A[p_n])=0$.
But among so many commutative algebraic groups $\A$ and number fields $k$, for each $n$, 
we expect that this happens for at least one of them, as in the case when $n=2$ for elliptic curves. 
We are going to give a new version of Theorem
\ref{Pal17} with such a bound $p_n$.

\par
For every $n$, let $\rho_n$ be the smallest prime such that, for all $p>  \rho_n$ the square $p^2$ 
divides no cardinalities of the maximal subgroups of class $\C_9$ of $\GL_n(q)$, excluding the groups of Lie type in the defining characteristic. 
Moreover let $2t_n$ be the greatest dimension of a projective symplectic group which is a subgroup of class $\C_9$ of $\SL_n(p^m)$ in the
defining characteristic case.
In addition, when $n=r^t$, for some prime $r$ and some positive integer $t$, let  ${\mathfrak{p}}_n$ be the smallest prime such that
for all $p> {\mathfrak{p}}_n$, the square $p^2$ does not divide $\prod_{i=1}^{t}(r^{2i}-1)$.
Observe that then $p^2$ divides no cardinalities of the subgroups of class $\C_6$ of $\GL_n(q)$.  
If $n$ is not a power of a prime, there are no subgroups of class $\C_6$ in $\GL_n(q)$, so set ${\mathfrak{p}}_n=1$ in that case. 
It is then clear from the proof of Theorem \ref{P1_bis}, that we can give a new version of Theorem
\ref{Pal17} as follows.

\begin{thm} \label{sharp}
Let $p$ be a prime number. Let $k$ be a number field and let $\A$ be a commutative algebraic group defined over $k$, with $\A[p]\simeq (\ZZ/p\ZZ)^n$. 
For every $n$, there exists a prime $p_n$, depending only on $n$, such that if $p> p_n$ and
$\A[p]$ is a very strongly irreducible $G_k$-module or a direct sum of very strongly irreducible $G_k$-modules, then
 the local-global divisibility by $p$ holds in $\A$ over $k$ and $\Sha(k,\A[p])=0$. Moreover $p_n=\max\left\{p_d,t_n+1,{\mathfrak{p}}_n, \rho_n\right\}$, where $d$ is the greatest divisor of $n$.
\end{thm}

\end{rem}

\bigskip  We now proceed with the proofs of the corollaries stated in Section \ref{sec0}, that can be quickly deduced from the proof
of Theorem \ref{P1_bis}.

\bigskip \noindent \textbf{Proof of Corollary \ref{Nori2}}
By the proof of Theorem \ref{P1_bis} Part \emph{i.}, concerning subgroups of class $\C_6$, 
one easily deduces that for $p> n+1$, the $p$-Sylow subgroup $G_p$ of $G$ is trivial. Thus $H^1(G_p,\A[p])=0$.   It is well-known that the restriction map

$$H^1(G,\A[p])\rightarrow H^1(G_p,\A[p])$$

\noindent is injective on the $p$-primary part of $H^1(G,\A[p])$ (see for example \cite[Theorem 4, Chap. IX, \S 2]{Ser3}).
Since $\A[p]\simeq (\ZZ/p\ZZ)^n$ is a $p$-group, the $p$-primary part of $H^1(G,\A[p])$ is the whole group.
Then $H^1(G_p,\A[p])=0$ implies $H^1(G,\A[p])=0$. \eop

\bigskip \noindent \textbf{Proof of Corollary \ref{Nori}}
In the proof of Theorem \ref{P1_bis}  Part \emph{ii.}, we showed that if $p>n/2+1$, then $p^2$ does
not divide the cardinality of any subgroup of $\GL_n(p^m)$, which is a group of Lie type in cross characteristic,
except a few cases when $n=3$ and $r^{\alpha}\leq 5$. If $n=3$ and $r^{\alpha} \leq 5$, then
every $p>63$ does not divide the cardinality of any group of this type.
So, if $p>\max\left\{63,(\ddfrac{n}{2}+1)^2\right\}$, then $p$ does not divide  the cardinality of any subgroup of $\GL_n(p^m)$, which is a group of Lie type in cross characteristic
and $H^1(G_p,\A[p])=0$.   
As in the proof of Corollary \ref{Nori2}, this implies $H^1(G,\A[p])=0$. \eop

\begin{rem} In the same way as Corollary \ref{Nori} and Corollary \ref{Nori2} one sees that 
\begin{description} 

\item[a)] if $p> n+2$ and  the  absolute Galois group $G_k$ acts on $\A[p]$ as a subgroup  of   an alternating group, then  
$H^1(G, \A[p])=0$.

\item[b)] if $p> 13$ and the  absolute Galois group $G_k$ acts on $\A[p]$ as a subgroup  of a sporadic group, then  
$H^1(G, \A[p])=0$.

\end{description}
\end{rem}

\bigskip\noindent \emph{Acknowledgments}.  
I am grateful to John van Bon,  Gabriele Ranieri and Jacob Stix for useful discussions.
I wrote a part of this paper at the Max Planck Institute for Mathematics in Bonn. I would like to warmly 
thank all people there for their kind hospitality and for the excellent work conditions.  
I am also grateful to Istituto Nazionale di Alta Matematica ``F. Severi'' that partially supported 
this research with grant ``Assegno di ricerca Ing. Giorgio Schirillo''. 
Finally, I would like to thank the anonymous referee for his precious and constructive remarks, that allow me to
improve the paper and shorten the proofs of Lemma \ref{extension} and its corollaries.

\vskip 0.5cm

\newpage

Laura Paladino\par\smallskip
Department of Mathematics\par
University of Pisa \par
Largo Bruno Pontecorvo, 5\par
56127 Pisa\par
Italy\par %\smallskip
e-mail addresses: laura.paladino@dm.unipi.it,  paladino@mat.unical.it

%\vskip 1cm

\end{document}